\newtheorem*{theorem*}{Theorem}
\newcommand{\jj}{j} 
\newcommand{\ekk}{k} 
\newcommand{\epp}{P}
\newcommand{\infinity}{\lower1pt\hbox{\large$\infty$}}
\newcommand{\sC}{\mathscr{C}}
\newcommand{\sQ}{\mathscr{Q}}
\newcommand{\CP}{\mathbb{CP}}
\newcommand{\smalpha}{\hbox{\small$\alpha$}}
\newcommand{\smbeta}{\hbox{\small$\beta$}}
\newcommand{\smeta}{\hbox{\small$\eta$}}
\newcommand{\smpi}{\hbox{\small$\pi$}}
\newcommand{\smomega}{\hbox{\small$\omega$}}
\begin{document}

\selectlanguage{english}

\articolo[Cubic surfaces]{Twistor lines on cubic
  surfaces\footnotemark[1]}{J.~Armstrong, M.~Povero and S.~Salamon}

\footnotetext[1]{Based on the third author's talk on 23 March 2012 at
  the conference {\it Geometria delle Variet\`a Algebriche} in Turin,
  dedicated to Alberto Conte}

\begin{abstract}
It is shown that there exist non-singular cubic surfaces in $\CP^3$
containing $5$ twistor lines. This is the maximum number of twistor
fibres that a non-singular cubic can contain. Cubic surfaces in
$\CP^3$ with $5$ twistor lines are classified up to transformations
preserving the conformal structure of $S^4$.
\end{abstract}

\section*{Introduction}

The twistor space, $Z$, of an oriented Riemannian $4$-manifold $M$ is
the bundle of almost-complex structures on $M$ compatible with the
metric and orientation. The $6$-dimensional total space of the twistor
space admits a canonical almost-complex structure which is integrable
whenever the $4$-manifold is half conformally flat.

The definition of the twistor space does not require the full
Riemannian metric; it only depends upon the conformal structure of the
manifold. The idea of studying the twistor space is that, on a half
conformally flat manifold, the conformal geometry of $M$ is encoded
into the complex geometry of $Z$.

As an example, the condition that an almost-complex structure on $M$
compatible with the conformal structure is \emph{integrable} can be
interpreted as saying that the corresponding section of $Z$ defines a
\emph{holomorphic} submanifold of $Z$.

The basic example of twistor space is that of the $4$-sphere $S^4$,
which itself may be identified with the quaternionic projective line
$\mathbb{HP}^1$, and is topologically $\RR^4\cup\infinity$. The twistor
space in this case is biholomorphic to $\CP^3$, and the associated
bundle structure $\CP^3\to\mathbb{HP}^1$ is the Hopf
fibration. Following the work of Penrose, Ward and Atiyah, it was used
to great effect in classifying instanton bundles on $S^4$ \cite{AW,ADHM}.

Combining these two facts, we see that complex hypersurfaces in
$\CP^3$ locally give rise to integrable complex structures on $S^4$
compatible with the metric. For topological reasons there are no
global almost-complex structures on $S^4$, so no hypersurface in
$\CP^3$ can intersect every fibre of the Hopf fibration in exactly one
point.

One can try to investigate the algebraic geometry of surfaces in
$\CP^3$ from this twistor perspective. In this paper, we take the opportunity
to revisit some of the beautiful results on cubic surfaces discovered
by geometers in the nineteenth century. A brief history of their
discoveries can be found in \cite{dolgachev}.

A natural question when studying complex surfaces from this point of
view is to classify surfaces in $\CP^3$ of degree $d$ up to a
conformal transformation of the base space $S^4$. Various conformal
invariants of a surface can be defined immediately. The fibres of the
Hopf fibration are complex projective lines in $\CP^3$, and the number
of fibres that lie entirely in the suface is an invariant of the
surface up to conformal transformation.

Closely related invariants arise from the topology of the discriminant
locus. A generic fibre, intersecting the surface transversely, will
contain $d$ points. This is simply because the defining polynomial of
the surface, when restricted to the fibre, gives a polynomial of
degree $d$. The set of points where the discriminant of this
polynomial vanishes is called the discriminant locus. It can be
thought of as the set of fibres that are not transverse to the surface
at each point of intersection, or as the set of points of $S^4$ where
we cannot locally define a complex structure corresponding to the
surface.

In \cite{salamonViaclovsky}, quadric surfaces are classified up to
conformal transformation in considerable detail. The table below shows
all the possible topologies of the discriminant locus in this case and
how they correspond to the number of twistor lines. A preliminary
question to ask when trying to study the conformal geometry of
surfaces of higher degree is: what is the maximum number of twistor
lines on surfaces of that degree?

\medskip
\begin{table}[h]
\centering{
\begin{tabular}{cl} \toprule
No of twistor lines & Topology of discriminant locus \\ \midrule $0$ &
Torus \\ $1$ & Torus with two points pinched together \\ $2$ & Torus
with two pairs of points pinched together \\ $\infinity$ & Circle
\\ \bottomrule
\end{tabular}
}
\end{table}
\bigskip

Before restricting to twistor lines, it is worth reviewing pure
algebro-geometric results on the number of projective lines on a
surface of given degree. Since twistor lines are fibres of a fibration
they must be skew (i.e., mutually disjoint), so we will also review
the maximum number of skew lines on a surface of degree $d$.

Dimension counting alone leads one to expect that a quadric surface
will contain an infinite number of lines, a cubic surface a finite
number of lines and a higher degree surface will generically contain
no lines at all.

A startling result is the celebrated Cayley--Salmon theorem: all
non-singular cubic surfaces contain precisely $27$ lines. Moreover a
non-singular cubic surface contains precisely $72$ sets of $6$ skew
lines.

The situation for higher degree curves is less well understood. The
state of knowledge about the number of lines on surfaces of degree $d$
was both reviewed and advanced in \cite{sarti}. We summarize these
findings next.

Define $N_d$ to be the maximum number of lines on a smooth projective
surface of degree $d$. Then:
\begin{itemize}\itemsep0pt\itemindent0pt
\item\ there are always 27 lines on a cubic,
\item\ $N_4 = 64$ (see \cite{segre}),
\item\ $N_d \leqslant (d-2)(11 d - 6 )$ (see \cite{segre}),
\item\ $N_d \geqslant 3 d^2$ (see \cite{caporasoHarrisMazur}),
\item\ $N_6 \geqslant 180$, $N_8 \geqslant 352$, $N_{1}2\geqslant 864$, $N_{20} \geqslant
  1600$ (see \cite{caporasoHarrisMazur,sarti}).
\end{itemize}

\smallbreak

\noindent Here are the bounds on $S_d$, the maximum number of skew
lines:
\begin{itemize}\itemsep1pt\itemindent0pt
\item\ there are always 6 skew lines on a cubic,
\item\ $S_4=16$ (see \cite{nikulin}),
\item\ $S_d \leqslant 2 d (d-2)$ when $d \geqslant 4$ (see \cite{miyaoka}),
\item\ $S_d \geqslant d(d-2) + 2$ (see \cite{rams}),
\item\ $S_d \geqslant d(d-2) + 4$ when $d\geqslant7$ is odd (see
  \cite{sarti}).
\end{itemize}
\medbreak

Specializing to the case of twistor lines, it was noted in the first
version of \cite{salamonViaclovsky} that the number of twistor lines
is at most $d^2$ when $d \geqslant 3$ and that there exists a quartic
surface containing exactly 8 twistor lines.

In this paper, we determine the maximum number of twistor lines on a
smooth cubic surface. We shall show that in fact there are at most $5$
twistor lines, and we shall give a detailed classification of all
cubic surfaces with $5$ twistor lines. In particular we shall prove
the

\begin{theorem*}
  Any set of $5$ points on a $2$-sphere, no $4$ of which lie on a
  circle, determines a one-parameter family of non-singular cubic
  surfaces with $5$ twistor lines. All cubics in the family are
  projectively, but not conformally, equivalent. Two such cubic
  surfaces are projectively equivalent if and only if the sets of $5$
  points on the $2$-sphere are conformally equivalent. All cubic
  surfaces with $5$ twistor lines arise in this way.
\end{theorem*}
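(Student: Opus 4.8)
The plan is to work throughout in the standard twistor picture of $S^4$: identify $\mathbb{C}^4$ with $\mathbb{H}^2$, let $j$ be the resulting fixed-point-free antiholomorphic involution of $\CP^3$, and recall that the twistor lines are exactly the $j$-invariant lines, one over each point of $S^4=\mathbb{H}\cup\{\infty\}$ — the fibre over $q=u+v\,\mathbf{j}$ is $\{z_3=uz_1-\bar vz_2,\ z_4=vz_1+\bar uz_2\}$ and the fibre over $\infty$ is $\ell_\infty=\{z_1=z_2=0\}$. The conformal group $G$ of $S^4$ acts on $\CP^3$ as the centralizer of $j$ and is transitive on points of $S^4$ and on round $2$-spheres, so after a conformal change of coordinates one may take a prescribed twistor line to be $\ell_\infty$ and a prescribed round $2$-sphere through $\infty$ to be the standard $\mathbb{C}\cup\{\infty\}\subset\mathbb{H}\cup\{\infty\}$. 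Then, given five points with no four concyclic on a $2$-sphere, normalize so the sphere is $\mathbb{C}\cup\{\infty\}$ and one point is $\infty$; the remaining four become distinct $c_2,\dots,c_5\in\mathbb{C}$, and ``no four concyclic'' says precisely that no three $c_i$ are collinear and the four $c_i$ do not lie on a circle.

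On the linear system $\CP^{15}$ of cubics through $\ell_\infty$, impose vanishing along the four twistor lines $\{z_3=c_iz_1,\ z_4=\bar c_iz_2\}$: substituting these relations into a general cubic through $\ell_\infty$ gives, for each $i$, a binary cubic in $(z_1,z_2)$, hence sixteen linear conditions. The crux is to show that these sixteen conditions have corank exactly two, so that the cubics through the five twistor lines form a pencil $\{S_s\}_{s\in\CP^1}$ with general member non-singular; the normalization hypotheses are exactly what prevents the corank from being larger or smaller and forces smoothness. The same computation yields necessity: if a cubic $S$ has five twistor lines and one of them is normalized to $\ell_\infty$, the remaining four give sixteen conditions which must have a nontrivial solution ($S$ itself), and this is impossible unless the four remaining twistor points are coplanar — i.e.\ the five points lie on a round $2$-sphere — and no four of them are concyclic (else every cubic through the five lines is singular). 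Then $S$ is one of the surfaces $S_s$. This establishes the first and last sentences.

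For the second sentence, observe that $\phi_t\colon[z_1:z_2:z_3:z_4]\mapsto[z_1:tz_2:z_3:tz_4]$, $t\in\mathbb{C}^\times$, fixes $\ell_\infty$ and each line $\{z_3=c_iz_1,\ z_4=\bar c_iz_2\}$, hence permutes the pencil. A $\phi$-invariant cubic must be singular, since a smooth cubic surface has finite automorphism group; so the two fixed points of $\phi$ acting on $\CP^1$ are the only singular members of the pencil, and all the others form a single $\mathbb{C}^\times$-orbit and are therefore projectively equivalent. But $\phi_t$ commutes with $j$ only when $|t|=1$, so only the circle $\mathrm{U}(1)\subset G$ acts conformally on the pencil; its orbits are circles, not all of $\CP^1$, so the pencil is not a single conformal class. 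No other conformal transformation collapses it: a conformal map between two members carries twistor lines to twistor lines, hence permutes the five points, hence — the configuration being non-special by ``no four concyclic'' — fixes them, so it lies in the stabilizer in $G$ of the five twistor lines, which is exactly this $\mathrm{U}(1)$.

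For the third sentence, one implication is immediate: if $g\in G$ carries one configuration of five points onto another it carries the first pencil onto the second, so corresponding members are conformally, hence projectively, equivalent, and by the preceding paragraph all members of a pencil are projectively equivalent. For the converse, let $h\in\mathrm{PGL}(4,\mathbb{C})$ with $h(S)=S'$. One shows that in a cubic with exactly five twistor lines these five are intrinsically characterized — they form the unique quintuple of pairwise skew lines through which a pencil of cubics passes — so $h$ carries the twistor quintuple of $S$ to that of $S'$; since $j$ and $j'':=h^{-1}jh$ each preserve every $\ell_i$, and the twistor structures preserving the quintuple $\{\ell_i\}$ form precisely the one-parameter family of conjugates $\phi_tj\phi_t^{-1}$, one may choose $t$ with $\phi_tj\phi_t^{-1}=j''$ and set $g:=h\phi_t$; then $g$ commutes with $j$ while $g(\ell_i)=h(\ell_i)$, so $g$ is conformal and carries the five points of $S$ onto those of $S'$. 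The one genuinely computational step — and the main obstacle — is the corank statement of the second paragraph: it is simultaneously the source of the bound five, of the appearance of the $2$-sphere, and of the dimension of the family, and unlike the rest of the argument it is not formal manipulation of the conformal group and the line configuration but a direct analysis of an explicit linear system.
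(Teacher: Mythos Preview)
Your approach differs substantially from the paper's. Where you try to extract everything from the rank of a $16\times16$ linear system, the paper works through the Schl\"afli graph: given four twistor lines $a_1,\dots,a_4$ on a smooth cubic, the two common transversals $b_5,b_6$ must satisfy $jb_5=b_6$, so both project to a single round $2$-sphere and the fifth twistor line is forced to be $c_{56}$; non-singularity is then read off from the description of the cubic as $\CP^2$ blown up at six points, with Pascal's theorem handling the conic condition. You acknowledge that your corank statement is the entire content and leave it unproved; the paper's route avoids that computation, at the price of invoking the classical structure theory of the $27$ lines.

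There is, however, a genuine error in your treatment of the third sentence. The claim that the five twistor lines are \emph{the unique quintuple of pairwise skew lines through which a pencil of cubics passes} is false. Any unordered pair of skew lines among the $27$ is met by exactly five of the remaining ones (in Schl\"afli's notation, $b_5$ and $b_6$ are met precisely by $a_1,a_2,a_3,a_4,c_{56}$), and that quintuple always carries a pencil of cubics, by the very two-transversal argument you invoke for the twistor lines. There are $27\cdot16/2=216$ such quintuples on every smooth cubic, and nothing distinguishes the twistor one intrinsically. Hence a projective equivalence $h\colon S\to S'$ has no reason to carry the twistor quintuple of $S$ to that of $S'$, and your construction of the conformal map $g$ breaks down. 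The paper does not attempt any intrinsic characterisation of the five lines; it handles this direction through the cross-ratio parameters $(\alpha,\alpha',\beta,\beta')$ and the classical moduli of cubics. A smaller slip: ``permutes the five points, hence\dots\ fixes them'' is unjustified, since the five points can have nontrivial conformal symmetry (as in the Fermat example); your conclusion survives only because any such symmetry group is finite.
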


One would like explicit examples of such surfaces. We provide the
necessary formulae and find the most symmetrical examples. In
particular, we shall show that the cubic surface with $5$ twistor
lines which has the largest conformal symmetry group is projectively,
but not conformally, equivalent to the Fermat cubic. There are various
choices one can make for a twistor structure on $\CP^3$ that give the
Fermat cubic $5$ twistor lines, and the set of such structures has
$54$ connected components.

The paper begins with a brief review of the twistor fibration of
$\CP^3$ and then moves on to discuss cubic surfaces. We review the
classical results on cubic surfaces and demonstrate how the same ideas
can be used to prove results about the twistor geometry.

\section{The twistor fibration}

To identify $S^4$ with $\mathbb{HP}^1$, we define two equivalence
relations on $\HH\times\HH$:
\[\begin{array}{l} 
[q_1, q_2 ]\sim_{\HH} [\lambda q_1, \lambda q_2], \qquad\lambda\in\HH^*,\\[8pt]
[q_1, q_2 ]\sim_{\CC} [\lambda q_1, \lambda q_2], \qquad\lambda\in\CC^*.
\end{array}\]
By definition, the quotient of $\HH \times \HH$ by the first equivalence
relation is the quaternionic projective line. Since $\HH \times
\HH \cong \CC^4$, the quotient by the second relation is
isomorphic to the complex projective space $\CP^3$.

Thus we can define a map $\smpi:\CP^3\to S^4$ by sending a complex
$1$-dimensional subspace of $\CC^4$ to the quaternionic
$1$-dimensional subspace of $\HH^2$ that it spans. The map $\smpi$ is
equivalent to the more general twistor fibration defined on an
arbitrary oriented Riemannian $4$-manifold as the total space of the
bundle of almost-complex structures compatible with the metric and
orientation.

On any twistor fibration one can define a map $\jj$ which sends an
almost-complex structure $J$ to $-J$. In our case, applying $\jj$ can
be thought of as the action of multiplying a $1$-dimensional complex
subspace of $\CC^4$ by the quaternion $j$ in order to get a new
$1$-dimensional subspace.

The map $\jj$ is an anti-holomorphic involution of the twistor space
to itself with no fixed points. Starting with such a map $\jj$, one
can recover the twistor fibration: given a point $z$ in $\CP^3$ there
is a unique projective line connecting $z$ and $\jj(z)$. These lines
form the fibres. We will call an anti-holomorphic involution on
$\CP^3$ obtained by conjugating $j$ by a projective transformation a
\emph{twistor structure}. The standard twistor structure on $\CP^3$ is
given by
\[[z_1,z_2,z_3,z_4]\longmapsto
[-\overline{z_2},\,\overline{z_1},-\overline{z_4},\,\overline{z_3}].\]
\smallbreak

The conformal symmetries of $S^4$ can be represented by quaternionic
M\"obius transformations 
\[ q\longmapsto (qc+d)^{-1}(qa+b),\qquad q\in\HH\] 
(see, for example, \cite{GSS}). These correspond to the projective
transformations of $\CP^3$ that preserve $\jj$. Thus we will say that
two complex submanifolds of $\CP^3$ are \emph{conformally equivalent}
if they are projectively equivalent by a transformation that preserves
$\jj$.

As an example, consider lines in $\CP^3$. If both lines are fibres of
$\smpi$ then they are conformally equivalent by an isometry of $S^4$
sending the image of one line under $\smpi$ to the image of the other
line. If a line is not a fibre of $\smpi$ then its image will be a
round $2$-sphere in $S^4$ (corresponding to a $2$-sphere or a
$2$-plane in $\RR^4$). Given such a $2$-sphere in $S^4$, there are in
fact two projective lines lying above it in $\CP^3$. Therefore, a line
in $\CP^3$ is given by either an oriented $2$-sphere or a point in
$S^4$. Moreover, any two such $2$-spheres are conformally
equivalent. This geometrical correspondence is described in detail by
Shapiro \cite{shapiro}.

As another example, consider planes in $\CP^3$. A plane in $\CP^3$
cannot be transverse to every fibre of $\smpi$ because it would then
define a complex structure on the whole of $S^4$, which is a
topological impossibility. Thus a plane always contains at least one
twistor fibre. Twistor fibres are always skew, whereas two lines in a
plane always meet. Therefore a plane always contains exactly one
twistor fibre. If one picks another line in the plane transverse to
the fibre, its image under $\smpi$ will be a $2$-sphere. We can find a
conformal transformation of $S^4$ mapping any $2$-sphere with a marked
point to any other $2$-sphere with a marked point. We deduce that any
two planes in $\CP^3$ are conformally equivalent.

The case of quadric surfaces is considered in detail in
\cite{salamonViaclovsky} and is much more complicated. The aim of this
paper is to make a start on the analogous question for cubic surfaces.

\section{The Schl\"afli graph}

Before looking at the twistor geometry of cubic surfaces. Let us
review the classical results about the lines on twistor surfaces.

The Cayley--Salmon theorem states that every non-singular cubic
surface contains exactly 27 lines \cite{cayley}. Schl\"afli discovered
that the intersection properties of these 27 lines are the same for
all cubics \cite{schlafli}. This means that we can define the
Schl\"afli graph of a cubic surface to be a graph with 27 vertices
corresponding to each line on the cubic and and with an edge between
the two vertices whenever the corresponding lines {\em do not}
intersect. This graph will be independent of the choice of
non-singular cubic surface. This definition is the standard one used
by graph theorists, but from our point of view the complement of the
Schl\"afli graph showing which lines \emph{do} intersect is more
natural. It is shown in Figure~\ref{fig:schlafliGraph1}

\begin{figure}[h]
\scalebox{1.3}{\includegraphics{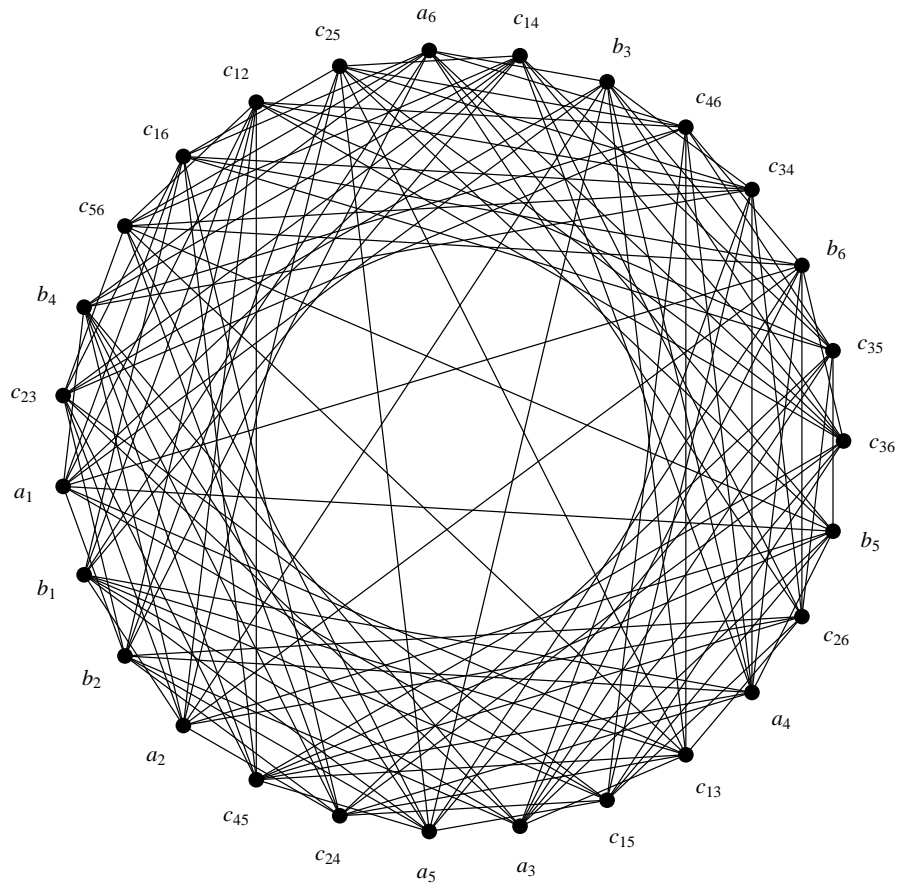}}
 \centering \caption {The complement of
  the Schlafli graph emphasizing a symmetry of order 9}
\label{fig:schlafliGraph1}
\end{figure}

Understanding the Schl\"afli graph provides a good deal of insight
into the geometry of cubic surfaces. It is interesting from a purely
graph theoretic point of view. Among its many properties, one
particularly nice one is that it is $4$-\emph{ultrahomogeneous}. A
graph is said to be $k$-ultrahomogeneous if every isomorphism between
subgraphs with at most $k$ vertices extends to an automorphism of the
entire graph.  If a graph is $5$-ultrahomogenous it is
$k$-ultrahomogeneous for any $k$. It turns out that the Schl\"afli
graph and its complement are the only $4$-ultrahomogeneous connected
graphs that are not $5$-ultrahomogeneous \cite{graphTheoryRef}.

Although our picture of the Schl\"afli graph is pretty, it is not very
practical. Schl\"afli devised a notation that allows one to understand
the graph more directly, and we shall now describe this.

Among the $27$ lines one can always find a set of $6$ skew lines. We
label these $a_1$, $a_2$, $a_3$, $a_4$, $a_5$ and $a_6$. Having chosen
these labels, there will now be $6$ more skew lines $b_i$ ($1
\leqslant i\leqslant 6$) with each $b_i$ intersecting all of the $a$
lines except for $a_i$. We thereby obtain the configuration shown in
Figure~\ref{fig:aAndBLines}.

\smallskip
\begin{figure}[ht]
\scalebox{1.1}{\includegraphics{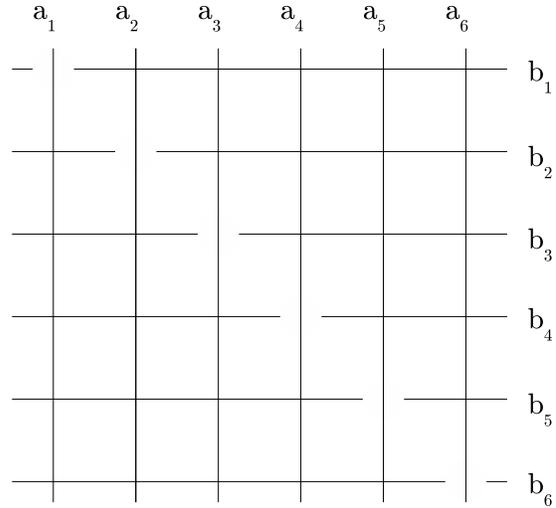}}
\centering \caption {A ``double six''}
\label{fig:aAndBLines}
\end{figure}
\smallskip

The remaining $15$ lines are labelled $c_{ij}$ with ($1 \leqslant i<j
\leqslant 6$). The line $c_{ij}$ is defined by the property that it
intersects $a_i$ and $a_j$, but no other $a$ lines. The full
intersection rules for distinct lines on the cubic surface are:
\begin{itemize}\itemsep2pt\itemindent0pt
\item $a_i$ never intersects $a_j$.
\item $a_i$ intersects $b_j$ iff $i \ne j$.
\item $a_i$ intersects $c_{jk}$ iff $i \in \{j,k\}$.
\item $b_i$ never intersects $b_j$.
\item $b_i$ intersects $c_{jk}$ iff $i \in \{j,k\}$.
\item $c_{ij}$ intersects $c_{kl}$ iff $\{i,j\} \cap \{k,l\} =
  \varnothing$.
\end{itemize}
\smallbreak

Another graphical representation of these properties was given in
\cite{povero} and is reproduced in
Figure~\ref{fig:schlafliRoadmap}. This is intended to be used rather
like the tables of distances between towns that are used to be found
in road atlases. A red/darker square indicates that the two lines
intersect and a cyan/lighter ones indicates that the lines are
skew. In this case we have chosen the ordering of the lines to show
that this figure can be constructed using only a small number of
different types of tile of size $3\times3$. The grouping is indicated
with black lines.

\begin{figure}[ht]
\scalebox{.97}{\includegraphics{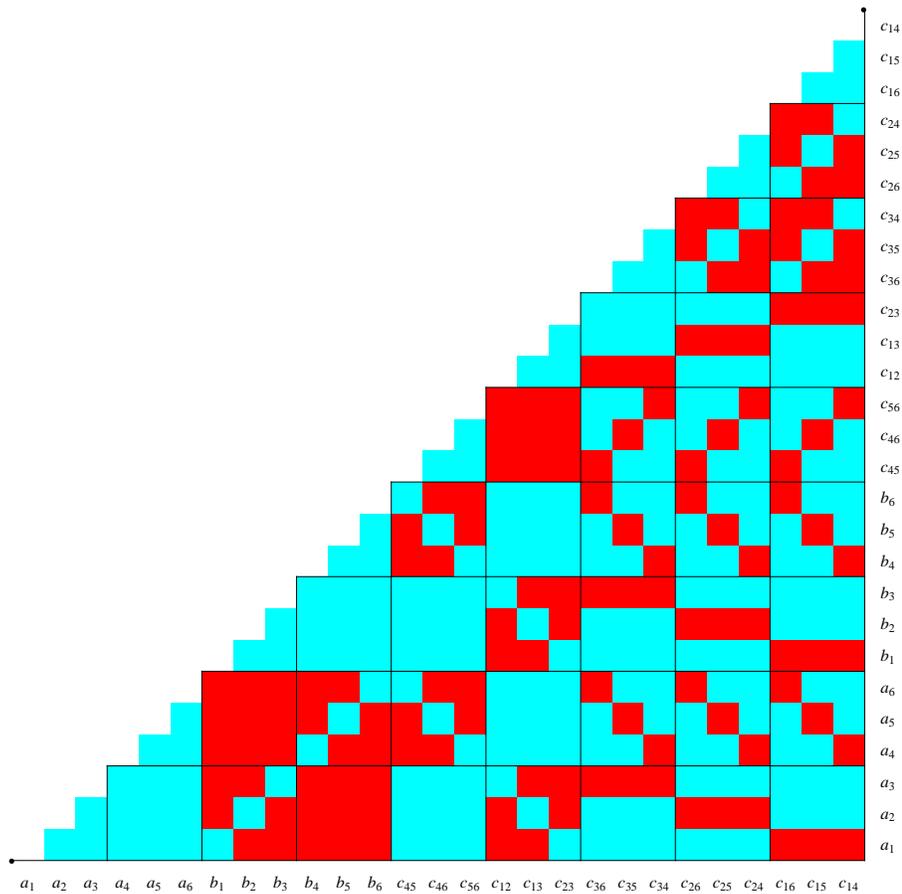}} \centering \caption {A
    tabular representation of the Sch\"afli graph}
\label{fig:schlafliRoadmap}
\end{figure}

\bigbreak

Being 4-ultrahomogeneous, it is no surprise that the Schl\"afli graph
has a large group of automorphisms. Each choice of $6$ skew lines from
the $27$ will give us a different way of labelling the lines as $a_i$,
$b_i$ and $c_{ij}$. Schl\"afli used the following notation for each
choice:
\[
\left( \begin{array}{cccccc} a_1 & a_2 & a_3 & a_4 & a_5 & a_6 \\ b_1
  & b_2 & b_3 & b_4 & b_5 & b_6
\end{array}
\right)
\]
A set of 12 lines with these intersection properties is called a
``double-six''. In this notation here are the forms of the other
double sixes:
\[
\left( \begin{array}{cccccc} a_1 & a_2 & a_3 & c_{56} & c_{46} &
  c_{45} \\ c_{23} & c_{13} & c_{12} & b_4 & b_5 & b_6
\end{array}
\right)
\]
\[
\left( \begin{array}{cccccc} a_1 & b_1 & c_{23} & c_{24} & c_{25} &
  c_{26} \\ a_2 & b_2 & c_{13} & c_{14} & c_{15} & c_{16}
\end{array}
\right)
\]
By varying the indices this gives a total of 36 double sixes on any
cubic surface --- hence 72 choices of a set of six disjoint lines and
$72\cdot 6!=51840$ automorphisms of the Schl\"afli graph.

This large automorphism group is in fact the Weyl group of the
exceptional Lie group $E_6$, and $27$ is the dimension of the smallest
non-trivial irreducible representation of $E_6$. Relative to the
isotropy subgroup $SU(6)\times_{\ZZ_2}SU(2)$ of the corresponding Wolf
space \cite{wolf}, this representation decomposes into irreducible
subspaces of dimension $12$ and $15$, namely $\CC^2\otimes\CC^6$ and
$\raise1.5pt\hbox{$\bigwedge^{\!2}$}\overline{\CC^6}$. This is the
algebraic interpretation of a double six.

As Schl\"afli discovered, consideration of the arrangement of the 27
lines on a non-singular cubic surface rapidly leads to a
classification of cubic surfaces up to projective transformation
\cite{schlafli}. By applying the same ideas, we can find a similar
classification of cubic surfaces with sufficiently many twistor lines.

\section{Classifying cubic surfaces with $5$ twistor lines}

As a first application of the Schl\"afli graph to the study of
twistor lines on cubic surfaces we prove

\begin{lemma}
If a non-singular cubic surface in $\CP^3$ contains four twistor lines
$a_1,a_2,a_3,a_4$ then, in Schl\"afli's notation, $\jj b_5 = b_6$.
\end{lemma}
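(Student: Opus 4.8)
Recall that the key structural input is the twistor involution $\jj$: it is an anti-holomorphic map on $\CP^3$ that carries the cubic surface $S$ to itself (since $S$ contains twistor lines, it is defined by a $\jj$-invariant polynomial, or at any rate the four given twistor lines force $\jj S = S$), and hence induces a permutation of the $27$ lines. This permutation must be a \emph{symmetry of the Schl\"afli graph}, i.e.\ it lies in the Weyl group $W(E_6)$ of order $51840$. Moreover $\jj$ fixes each twistor line $a_i$ ($i=1,2,3,4$) setwise, because a twistor line is precisely a fibre of $\smpi$, which is the unique line through $z$ and $\jj(z)$. So the plan is: (1) identify the constraints on a graph automorphism $\sigma$ coming from fixing four pairwise-skew vertices $a_1,a_2,a_3,a_4$; (2) show these constraints force $\sigma(b_5)=b_6$ and $\sigma(b_6)=b_5$.

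\smallbreak

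\textbf{Carrying it out.} First I would record, using the intersection rules listed above, which lines meet all four of $a_1,a_2,a_3,a_4$, which meet exactly three of them, and so on — this combinatorial fingerprint is preserved by $\sigma$. The lines skew to all four $a_i$ are: the other $a$'s, namely $a_5,a_6$; and $c_{56}$; so $\{a_5,a_6,c_{56}\}$ is a $\sigma$-invariant set. The lines meeting all four of $a_1,\dots,a_4$ are exactly $b_5$ and $b_6$ (each $b_j$ with $j\notin\{1,2,3,4\}$ meets $a_1,a_2,a_3,a_4$), so $\{b_5,b_6\}$ is $\sigma$-invariant: either $\sigma$ fixes $b_5,b_6$ individually or it swaps them. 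The lines meeting exactly three of the four are $b_1,b_2,b_3,b_4$ and the six $c_{ij}$ with $\{i,j\}\subset\{1,2,3,4\}$; lines meeting exactly one are the $c_{i5},c_{i6}$ for $i\le4$; lines meeting none but among the $c$'s is $c_{56}$ as noted. So the real content is to rule out $\sigma$ fixing $b_5$ and $b_6$ separately. For this I would use that $\sigma=\jj$ is an \emph{anti-holomorphic involution without fixed points}: if $\jj$ fixed a line $\ell$ setwise, the restriction $\jj|_\ell$ is an anti-holomorphic involution of $\CP^1$; a fixed-point-free such involution is the antipodal map (so $\ell$ is a twistor line), while one with fixed points is conjugation on $\RR\cup\infinity$. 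Since $\jj$ has no fixed points at all, every $\jj$-invariant line is a twistor line. Thus if $\jj b_5=b_5$ then $b_5$ is a fifth twistor line; but $b_5$ meets $a_1$ (indeed it meets $a_1,a_2,a_3,a_4$), and twistor lines are mutually skew — contradiction. Hence $\jj b_5\neq b_5$, and since $\{b_5,b_6\}$ is $\jj$-invariant, $\jj b_5=b_6$ (and $\jj b_6=b_5$).

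\smallbreak

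\textbf{Where the difficulty lies.} The combinatorics of step (1) is routine bookkeeping with the six intersection rules. The one genuine point is the observation in step (2) that the only obstruction — $\jj$ preserving $b_5$ — is killed by the fixed-point-freeness of $\jj$ together with the incompatibility of "twistor line" with "meets a twistor line". One should also make sure $\{b_5,b_6\}$ really is forced to be $\jj$-invariant rather than, say, $b_5$ being sent to some $c_{ij}$; this is exactly why I compute the full incidence-count fingerprint $\#\{\,i\le 4 : \ell\cap a_i\neq\varnothing\,\}$ for every line $\ell$ and note that the value $4$ is attained only by $b_5,b_6$. With that fingerprint in hand the argument is complete.
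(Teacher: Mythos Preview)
Your argument has a real gap at the outset: you assert that $\jj$ carries the cubic surface $S$ to itself and therefore permutes its $27$ lines, but you do not justify this. It does \emph{not} follow from ``$S$ contains twistor lines'' that the defining cubic is $\jj$-invariant (a cubic through a single twistor fibre certainly need not be), and the dimension count shows that the family of cubics through four given skew lines is projectively $3$-dimensional, so ``$\jj S$ and $S$ share $a_1,\dots,a_4$'' does not force $\jj S=S$ either. Without this, the statement ``$\jj$ induces an element of $W(E_6)$ acting on the set of $27$ lines'' is simply unavailable, and your fingerprint calculation has nothing to bite on.

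The paper sidesteps this issue entirely. It never claims $\jj S=S$; instead it observes directly that the line $\jj b_5$ meets each $\jj a_i=a_i$, hence meets $S$ in at least four distinct points (one on each $a_i$, and these are distinct because the $a_i$ are pairwise skew). A line meeting a cubic in four points lies on it, so $\jj b_5\subset S$. Now $\jj b_5$ is one of the $27$ lines, and by the Schl\"afli intersection rules the only lines on $S$ meeting all of $a_1,a_2,a_3,a_4$ are $b_5$ and $b_6$, so $\jj b_5\in\{b_5,b_6\}$. Ruling out $\jj b_5=b_5$ is then done via the fixed-point-freeness of $\jj$, essentially as you do (the paper phrases it as: $b_5\cap a_1$ and $(\jj b_5)\cap a_1$ are the distinct points $p$ and $\jj(p)$, so $\jj b_5\ne b_5$). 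The missing idea in your write-up is exactly this four-point incidence step; once you insert it, your graph-automorphism machinery becomes correct but heavier than needed, since that single observation already pins $\jj b_5$ down to $\{b_5,b_6\}$ without invoking $W(E_6)$ at all.
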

\begin{proof}
Since the Schl\"afli graph is $4$ ultrahomogenoeus and twistor lines
are always skew, we can assume that the first four lines are indeed
those of a double six.

In Schl\"afli's notation, the line $b_5$ intersects $a_1$, $a_2$,
$a_3$ and $a_4$. Therefore $\jj b_5$ intersects $\jj a_1=a_1$, $\jj
a_2=a_2$, $\jj a_3=a_3$ and $\jj a_4=a_4$. Since it $\jj b_5$ is a
line and since it intersects the cubic surface in $4$ points, it must
lie in the cubic surface. Since $\jj$ has no fixed points, the points
of intersection of $b_5$ and $\jj b_5$ with the line $a_1$ must be
distinct. So $\jj b_5\ne b_5$, Given this and the fact that it
intersects $a_1$, $a_2$, $a_3$ and $a_4$ we deduce that $\jj b_5 =
b_6$.
\end{proof}

\begin{corollary}\label{c56}
  If a non-singular cubic surface contains five twistor lines, and we
  label the first four $a_1,a_2,a_3,a_4$ in Schl\"afli's notation then
  the fifth line is $c_{56}$.
\end{corollary}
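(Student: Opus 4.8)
The plan is to combine the fact that any two twistor fibres are skew with the Schl\"afli intersection rules to cut the possibilities for the fifth line down to three candidates, and then to eliminate two of them using the Lemma.

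First I would note that the fifth twistor line is disjoint from each of $a_1,a_2,a_3,a_4$, so it must be one of the $23$ remaining lines that happens to be skew to all four of them. Running through the six intersection rules this is a short check: a line $b_i$ meets $a_j$ whenever $i\ne j$, so every $b$-line meets at least three of $a_1,a_2,a_3,a_4$; a line $c_{jk}$ meets $a_i$ exactly when $i\in\{j,k\}$, so $c_{jk}$ is skew to all four precisely when $\{j,k\}=\{5,6\}$; and $a_5$, $a_6$ are skew to all of $a_1,a_2,a_3,a_4$ automatically. Hence the fifth twistor line is $a_5$, $a_6$ or $c_{56}$.

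It remains to exclude $a_5$ and $a_6$. Here I would use that the map $\jj$ carries each twistor fibre to itself: the fibre through $z$ is the line joining $z$ to $\jj z$, and since $\jj$ is an involution taking lines to lines it fixes this line setwise (this is already implicit in the proof of the Lemma). Suppose then that $a_5$ is the fifth twistor line, so $\jj a_5 = a_5$. By the Lemma $\jj b_5 = b_6$, and applying $\jj$ again gives $\jj b_6 = b_5$. Since $5\ne 6$, the lines $a_5$ and $b_6$ meet, say in a point $p$; applying $\jj$ yields $\jj p\in\jj a_5\cap\jj b_6 = a_5\cap b_5$, which is empty because $a_i$ never meets $b_i$. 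This contradiction rules out $a_5$, and the same computation with $b_5$ in place of $b_6$ (using $\jj b_5 = b_6$ directly) rules out $a_6$. Therefore the fifth twistor line is $c_{56}$.

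The only place needing any care is the case-checking in the first step --- verifying rule by rule that $a_5$, $a_6$ and $c_{56}$ are exactly the lines skew to all of $a_1,a_2,a_3,a_4$ --- together with the immediate observations that the Lemma applies, since the surface contains the four twistor lines $a_1,\dots,a_4$, and that $\jj$ fixes twistor fibres. Everything else is a one-line consequence of the intersection rules, so I do not anticipate a serious obstacle.
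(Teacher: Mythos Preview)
Your proposal is correct and follows essentially the same route as the paper: reduce to the three candidates $a_5,a_6,c_{56}$ by the Schl\"afli intersection rules, then use the Lemma $\jj b_5=b_6$ together with $\jj a_i=a_i$ for a putative twistor fibre to force an intersection $a_i\cap b_i$, a contradiction. The paper phrases the elimination slightly more tersely (starting from $a_6\cap b_5$ rather than $a_5\cap b_6$), but the argument is identical.
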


\begin{proof}
Since the fifth twistor line must be skew to $a_1$, $a_2$, $a_3$ and
$a_4$, it must be one of $a_5$, $a_6$, $c_{56}$. Suppose that the
fifth line is $a_6$. This means it intersects $b_5$ so $\jj a_6=a_6$
intersects $\jj b_5=b_6$, which is a contradiction. The same argument
shows that the fifth line cannot be $a_5$.\end{proof}

The arrangement of lines described by Corollary~\ref{c56} is
illustrated in Figure~\ref{fig:sevenLines}. The twistor lines are
shown as roughly vertical.

\begin{figure}[ht]
\scalebox{1.3}{\includegraphics{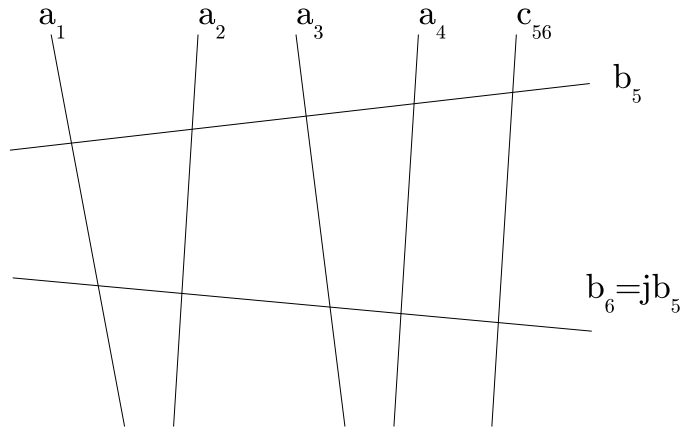}} 
\centering \caption {Seven lines}
\label{fig:sevenLines}
\end{figure}
\medbreak

In particular we have proved:

\begin{theorem}
A non-singular cubic surface contains at most five twistor lines.
\end{theorem}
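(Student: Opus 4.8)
The plan is to show that a non-singular cubic surface cannot contain six or more twistor lines, which combined with the preceding analysis gives the bound of five. First I would observe that any set of twistor lines must be a set of mutually skew lines, since distinct fibres of $\smpi$ are disjoint; hence at most six twistor lines could possibly occur (the maximum number of skew lines on a cubic). So the only thing left to rule out is the case of exactly six twistor lines, i.e.\ a set of six skew twistor lines on the cubic.

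Suppose for contradiction that there are six twistor lines. By $4$-ultrahomogeneity of the Schl\"afli graph and the fact that six skew lines on a cubic always form half of a double six, we may assume the six twistor lines are $a_1,\dots,a_6$. Applying the Lemma to the first four of these gives $\jj b_5 = b_6$. But now I would rerun the same argument of Corollary~\ref{c56}: the line $a_6$ is a twistor line intersecting $b_5$, so $\jj a_6 = a_6$ intersects $\jj b_5 = b_6$; however the intersection rules state that $a_i$ never meets $b_j$ when $i=j$, so $a_6$ does not meet $b_6$ — a contradiction. (Equivalently, one can invoke Corollary~\ref{c56} directly: with $a_1,a_2,a_3,a_4$ twistor, a fifth twistor line must be $c_{56}$, so the set of twistor lines cannot contain two further $a$-lines.) Hence six twistor lines is impossible, and at most five can occur.

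The only real subtlety is the appeal to the structure theory of skew lines on a cubic: one must know that any collection of six pairwise skew lines among the $27$ is projectively equivalent to an $(a_1,\dots,a_6)$ half-double-six, and that there is no configuration of seven or more pairwise skew lines. Both facts are classical consequences of the Schl\"afli analysis (indeed they follow from $4$-ultrahomogeneity together with the explicit intersection rules, since any skew pair extends in a combinatorially unique way), so no genuine obstacle remains — the theorem is essentially a corollary of the Lemma and Corollary~\ref{c56} once the skewness constraint is invoked. The argument does not even need the twistor structure beyond the two properties already used: $\jj$ fixes each twistor line and $\jj$ preserves incidence.
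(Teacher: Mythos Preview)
Your proof is correct and follows essentially the same route as the paper: the theorem is stated there as an immediate consequence of the Lemma and Corollary~\ref{c56}, and your argument simply re-runs the Corollary's contradiction (that $a_5$ or $a_6$ cannot be twistor once $\jj b_5=b_6$) after labelling a hypothetical sextet of skew twistor lines as $a_1,\dots,a_6$. The only cosmetic difference is that the paper phrases it as ``the fifth must be $c_{56}$, so there is no room for a sixth,'' whereas you start from six and derive the contradiction directly.
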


This raises the question of whether or not we can find cubic surfaces
containing $5$ twistor lines. Simple dimension counting suggests it
should be easy to find cubic surfaces which contain $4$ twistor
lines. Simply select any four twistor lines and apply the well-known

\begin{proposition}
Four lines in $\CP^3$ always lie on a (possibly singular) cubic
surface.
\end{proposition}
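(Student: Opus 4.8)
The plan is a straightforward parameter count. The homogeneous cubic forms on $\CC^4$ form a vector space $V$ of dimension $\binom{6}{3}=20$, and every cubic surface in $\CP^3$ is the zero locus of a nonzero element of $V$. The first step is the elementary observation that, for a fixed line $\ell\subset\CP^3$, the restriction map $F\mapsto F|_\ell$ is linear from $V$ to the space of binary cubic forms on $\ell\cong\CP^1$, which has dimension $4$. Hence the condition that the surface $\{F=0\}$ contain $\ell$ --- namely $F|_\ell\equiv 0$ --- defines a linear subspace of $V$ of codimension at most $4$.

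Now, given four lines $\ell_1,\dots,\ell_4$ in $\CP^3$, I would assemble the four restriction maps into a single linear map
\[ \Phi\colon V\longrightarrow W_1\oplus W_2\oplus W_3\oplus W_4,\qquad F\longmapsto\bigl(F|_{\ell_1},\dots,F|_{\ell_4}\bigr), \]
where $W_i$ is the $4$-dimensional space of binary cubics on $\ell_i$. The target has dimension $16$, while $\dim V=20$, so $\dim\ker\Phi\geqslant 4>0$. Any nonzero $F$ in $\ker\Phi$ is then a nonzero cubic form vanishing identically along each of the four lines, and its zero locus is a cubic surface --- in general singular, and possibly reducible, for instance a union of planes --- containing $\ell_1,\dots,\ell_4$.

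There is essentially no obstacle to overcome: the argument is completely insensitive to the mutual position of the four lines, since coincidences or incidences among them can only decrease the number of linear conditions imposed on $F$. The one point worth emphasising is why the statement must allow singular surfaces: the cubic produced by this count need not be smooth or irreducible. In fact the computation gives $\dim\ker\Phi\geqslant 4$, so the four lines lie on at least a $3$-dimensional projective family of cubic surfaces --- ample room to later impose the extra conditions that force a fifth twistor line onto the surface.
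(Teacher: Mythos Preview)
Your proof is correct and is essentially the same dimension count as the paper's: the paper picks four points on each line and observes that a cubic vanishing at those $16$ points must contain all four lines, yielding $16$ linear conditions on the $20$-dimensional space of cubic forms. Your restriction-map formulation is just a coordinate-free rephrasing of the same $16$ conditions, since evaluating a binary cubic at four distinct points is equivalent to asking that its restriction to the line vanish identically.
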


\begin{proof}
Pick $4$ points on each line to get a total of 16 points. If a cubic
surface has $4$ points in common with a line, then it contains the
entire line. So if we can find a cubic containing all 16 points, it
will contain all $4$ lines.

The general equation for a cubic surface has $20$ coefficients, since
this is the dimension of $S^3(\CC^4)$. Putting the coordinates of
these 16 points into the equation for the cubic surface gives us 16
linear equations in the 20 unknown coefficients, so non-trivial
solutions exist. \end{proof}

It will become clear later that if we choose everything generically,
the cubic surface will be non-singular. A corollary of this is that
{\,\it four generic lines in $\CP^3$ have two lines intersecting all
  four of them}. This observation can be proved easily enough without
appealing to the theory of cubics --- for example one can use 2-forms
to represent points of the Klein quadric, or Schubert
calculus. Indeed, in \cite{geometryAndTheImagination} this observation
is used as the starting point to establish the existence of the 27
lines on a cubic!

The same dimension counting argument tells us that $5$ lines (let
alone twistor ones) do not generically all lie on a cubic surface. Let
us understand geometrically when $5$ lines \emph{do} all lie on a cubic
surface.

\begin{proposition}\label{5+1}
  Five lines in $\CP^3$ lie on a (possibly singular) cubic surface if
  they are collinear, that is, there exists a fifth line intersecting
  all four.
\end{proposition}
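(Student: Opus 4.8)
The plan is to run the same dimension count that established the previous proposition, now using the transversal line to make one of the four incidence conditions on each line redundant. Write $\ell_1,\dots,\ell_5$ for the five lines and let $m$ be a line meeting all of them --- in the statement $m$ is the fifth of the five lines, meeting the other four, but the argument does not care whether $m$ is one of the $\ell_i$ or a separate line. Put $p_i=m\cap\ell_i$. I shall use, as above, that a cubic surface meeting a line in four distinct points contains that line, since the defining polynomial restricted to the line is a polynomial of degree $3$ and so cannot vanish at four points without vanishing identically.

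First I would pick four distinct points on $m$; any cubic surface through these four points contains $m$ and hence passes through every $p_i$. Then, on each line $\ell_i$, I would choose three more points, distinct from one another and from $p_i$. A cubic surface that contains $m$ and in addition vanishes at these three points meets $\ell_i$ in the four distinct points $p_i$ and the three chosen ones, so it contains $\ell_i$. Altogether this imposes $4+5\cdot 3=19$ linear conditions on the $20$-dimensional space $S^3(\CC^4)$ of cubic forms (only $4+4\cdot 3=16$ if $m$ is one of the $\ell_i$, as in the statement, and fewer still if some of the lines coincide). In every case this is fewer than $20$ conditions, so a non-zero cubic form satisfying all of them exists, and its zero locus is a cubic surface --- possibly singular, possibly even reducible if the lines are special --- containing $m$ and all five of the $\ell_i$.

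I do not expect a real obstacle here: this is a routine variant of the earlier count, the only new ingredient being that forcing the cubic to contain the transversal supplies one of the four needed points on each $\ell_i$ for free. The single point requiring care is that the nineteen (or sixteen) points can be chosen so that the vanishing conditions genuinely force the inclusions $m\subset\{F=0\}$ and $\ell_i\subset\{F=0\}$; this needs only the mild distinctness noted above and can fail only in degenerate configurations, where anyway fewer conditions suffice. That the cubic so produced may be singular or reducible costs nothing, the proposition claiming no more; the sharper question --- when five such lines lie on a \emph{non-singular} cubic, and how the five twistor lines then sit among its $27$ lines --- is what the Schl\"afli-graph analysis that follows is built to answer.
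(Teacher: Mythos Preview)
Your argument is correct and is essentially the paper's own proof: both impose $19$ linear conditions on the $20$-dimensional space of cubic forms by picking $4$ points on the transversal and $3$ further points on each of the five lines, then use the ``$4$ points on a line'' trick twice (once to swallow the transversal, once to swallow each $\ell_i$). The only cosmetic difference is that the paper takes its $4$ points on the transversal to be $p_1,p_2,p_3,p_4$ themselves, whereas you allow them to be arbitrary; this changes nothing.
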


\begin{proof}
  Let $\ell_1,\ldots,\ell_5$ be the lines and let $\ekk$ be another
  line intersecting all five in the points $p_i$. Choose 3 other
  points on each of the lines to get a set $\epp$ of 20 points.

The condition on the coefficients of a cubic surface for it to contain
all the points in $\epp$ except for $p_5$ is represented by $19$
linear equations in $20$ unknowns. So we can find a cubic surface
passing through all the points marked in black in
Figure~\ref{fig:fiveLines}. This cubic surface has $4$ points in common
with $\ekk$ so it contains $\ekk$. In particular it contains $p_5$. So
it actually contains all 5 of the $\ell$ lines.
\end{proof}

\begin{figure}[h]
\scalebox{1.1}{\includegraphics{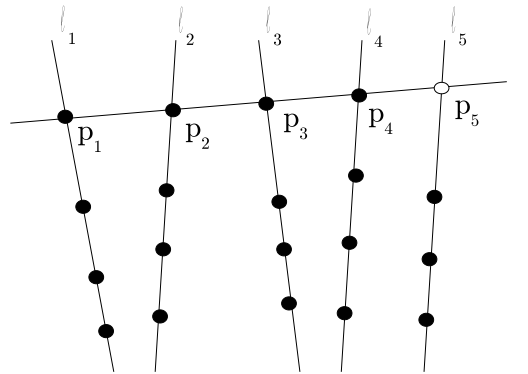}} 
\centering 
\caption {Nineteen points plus one}
\label{fig:fiveLines}
\end{figure}

As a partial converse to Proposition~\ref{5+1}, we remark that if $5$
\emph{skew} lines lie on a cubic surface then they are necessarily
collinear. For example, if the cubic is non-singular then the
Schl\"afli graph guarantees that the $5$ lines are collinear; we can
label them $a_1,\ldots,a_5$ (all intersecting just $a_6$) or
$a_1,a_2,a_3,a_4,c_{56}$ (all intersecting $b_5$ and $b_6$).

The situation that is of interest for us is the second, in which we
have $5$ lines that are collinear in two different ways. The dimension
counting argument above now shows that if one has $5$ lines that are
collinear in two ways, there will be a one-parameter family of (possibly
singular) cubic surfaces containing all the lines.

Another way of seeing why there is a one-parameter family of cubics
through such a configuration of lines is to observe that there is a
one-parameter family of projective transformations that fixes all the
lines. To see this observe that you can choose coordinates such that
$b_5$ is given by the equations $z_1=z_3=0$ and $b_6$ is given by the
equations $z_2=z_4=0$. Since the lines $a_1$, $a_2$, $a_3$, $a_4$,
$c_{56}$ are all skew, their intersections with $b_5$ are distinct. So
we can make a M\"obius transformation of $z_2$ and $z_4$ so that in
the inhomogenous coordinate $z_2/z_4$ the intersection points of
$a_1$, $a_2$, $c_{56}$ with $b_5$ are respectivly $0$, $1$,
$\infinity$. Similarly we can choose our coordinates such that the
intersections of $a_1$, $a_2$, $c_{56}$ with $b_6$ correspond to
$z_1/z_3=0$, $1$, $\infinity$. With these specifications, one can choose
to independently rescale the coordinate pairs $(z_1,z_3)$, $(z_2,z_4)$
and one will still have coordinates with these properties. This choice
of coordinates gives us a one-parameter family of projective
transformations that fix all the lines.

Given two non-singular cubics $(\sC_1,\sC_2)$ that each contain all 7
lines, we can construct the projective transformation mapping $\sC_1$ to
$\sC_2$ directly from the geometry of the cubics.

Indeed, given a point $p \in \CP^3$ away from $b_5$ and $b_6$ there is
a unique line $\ell_p$ passing through $b_5$, $b_6$ and $p$. This line
intersects each $\sC_i$ in $3$ points, so for generic $p$ there is a
unique projective transformation of $\ell_p$ fixing the points where
$\ell_p$ intersects $b_5$ and $b_6$, and mapping the remaining point
of $\ell_p\cap\sC_1$ to that of $\ell_p\cap\sC_2$. If we define $\Phi$
to map $p$ to the image of $p$ under this projective transformation,
then we see that, so long as it is defined, $\Phi$ maps $\sC_1$ to
$\sC_2$. If we can show that $\Phi$ extends to a biholomorphism, then
we will have shown that $\Phi$ is a projective transformation. This is
not too difficult to prove directly, but we will postpone the proof to
the next section when it falls out from general theory.

We have just shown that any two non-singular cubics that contain all 7
lines will be projectively equivalent by a projective transformation
that fixes all $7$ lines.

We have already seen that there is only a one-parameter family of
projective transformations that fix all the lines, so there is at most
a parameter family of non-singular cubics containing all 7
lines. Since non-singularity is an open condition on the space of
cubic surfaces, there is at most a one-parameter family of cubics
containing all $7$ lines if there are any non-singular cubics
containing all $7$ lines.

Putting all of this information together, we end up with a
classification of non-singular cubic surfaces. To make things
explicit, write $(l_1, l_2; l_3, l_4)_k$ for the cross ratio of the
intersection points of four lines $l_i$ meeting on a fifth line
$k$. We can then define four invariants associated with the
configuration of lines as follows:
\begin{equation}
\label{crossRatioInvariants}
\begin{array}{lcl}
 \smalpha & = & (c_{56}, a_1; a_2, a_3)_{b_5} \\[2pt]
 \smalpha' & = & (c_{56}, a_1; a_2, a_3)_{b_6} \\[2pt]
 \smbeta & = & (c_{56}, a_1; a_2, a_4)_{b_5} \\[2pt] 
\smbeta' & = & (c_{56}, a_1; a_2, a_4)_{b_6}
\end{array}
\end{equation}

With this notation, we state

\begin{theorem}
If $b_5$ and $b_6$ are skew lines in $\CP^3$ and $a_1$, $a_2$, $a_3$,
$a_4$, $c_{56}$ are five other skew lines each passing through $b_5$
and $b_6$ then consider the pencil of cubics spanned by the following
two polynomials in $z_1$, $z_2$, $z_3$, $z_4$:
\begin{eqnarray}
\label{singularCubic1}
\begin{array}{cccrc}
\sC_1 &=&& \left[(-\smalpha \smbeta \smbeta '+\smbeta^2 \smbeta '+\smbeta
\smalpha ' \smbeta '-\smbeta^2 \smalpha ' \smbeta
'-\smbeta (\smbeta ')^2+\smalpha \smbeta (\smbeta ')^2) z_3\right]
\kern-10pt & z_2^2 \\[3pt] 
&& +& \left[ \left(\smalpha \smbeta \smalpha '-\smbeta^2
  \smalpha '-\smalpha \smalpha ' \smbeta '+\smbeta^2 \smalpha
  ' \smbeta '+\smalpha (\smbeta ')^2-\smalpha \smbeta (\smbeta
  ')^2 \right) z_1 \right. & \\ 
&&& \ +\left. \left(-\smbeta\smalpha'
  +\smbeta^2 \smalpha '+\smalpha \smbeta '-\smbeta^2 \smbeta
  '-\smalpha (\smbeta ')^2+\smbeta (\smbeta ')^2\right)
  z_3\right] 
& \kern-10pt z_2 z_4 \\[3pt] 
&& + & \left[(\smbeta \smalpha '-\smalpha \smbeta
\smalpha '-\smalpha \smbeta '+\smalpha \smbeta \smbeta '+\smalpha
\smalpha ' \smbeta '-\smbeta \smalpha ' \smbeta ') z_1\right]
& \kern-15pt z_4^2
\end{array}
\end{eqnarray}
\begin{eqnarray}
\label{singularCubic2}
\begin{array}{cccrc}
\sC_2 &=&& \left[(-\smbeta \smalpha '+\smalpha \smbeta \smalpha '+\smalpha
\smbeta '-\smalpha \smbeta \smbeta '-\smalpha \smalpha ' \smbeta
'+\smbeta \smalpha ' \smbeta ') z_2\right] \kern-10pt & z_3^2 \\[3pt] 
&&+& \left[\left(\smbeta (\smalpha ')^2-\smalpha \smbeta (\smalpha')^2 
-\smalpha^2 \smbeta '+\smalpha \smbeta \smbeta
  '+\smalpha^2 \smalpha ' \smbeta '-\smbeta \smalpha '
  \smbeta '\right) z_2 \right. & \\ 
&&& \ +\left.\left(-\smalpha^2
  \smalpha '+\smbeta \smalpha '+\smalpha (\smalpha ')^2-\smbeta
  (\smalpha ')^2-\smalpha \smbeta '+\smalpha^2 \smbeta
  '\right) z_4\right] 
& \kern-10pt z_1 z_3 \\[3pt] 
&&+& \left[\left(\smalpha^2 \smalpha
'-\smalpha \smbeta \smalpha '-\smalpha (\smalpha ')^2+\smalpha
\smbeta (\smalpha ')^2+\smalpha \smalpha ' \smbeta '-\smalpha^2
\smalpha ' \smbeta '\right) z_4\right] 
& \kern-15pt z_1^2
\end{array}
\end{eqnarray}
All the cubics in this pencil contain all $7$ lines.  If there is a
non-singular cubic containing all $7$ lines, then all cubics containing
the $7$ lines lie in the pencil. All the non-singular surfaces in the
pencil are projectively isomorphic. 

All non-singular cubics arise in this way.
\end{theorem}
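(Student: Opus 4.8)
The plan is to choose coordinates adapted to the seven lines, turn ``being a cubic through the seven lines'' into a linear condition, and then use the material of the preceding paragraphs to control the dimension of the resulting solution space. First I would fix homogeneous coordinates with $b_5=\{z_1=z_3=0\}$ and $b_6=\{z_2=z_4=0\}$, and use the M\"obius freedom in the pairs $(z_2{:}z_4)$ on $b_5$ and $(z_1{:}z_3)$ on $b_6$ to place the intersections of $a_1,a_2,c_{56}$ with $b_5$ at $z_2/z_4=0,1,\infty$ and with $b_6$ at $z_1/z_3=0,1,\infty$. With this normalisation $a_3$ and $a_4$ are pinned down by the four numbers $\smalpha,\smalpha',\smbeta,\smbeta'$ of $(\ref{crossRatioInvariants})$ --- $\smalpha$ is the $z_2/z_4$-coordinate of $a_3\cap b_5$, $\smalpha'$ the $z_1/z_3$-coordinate of $a_3\cap b_6$, likewise for $a_4$ --- so these are exactly the moduli of the configuration. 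A cubic $F$ vanishes on $b_5$ and $b_6$ iff every monomial of $F$ is divisible by one of $z_1,z_3$ and by one of $z_2,z_4$; there are $12$ such monomials, so these cubics form a $12$-dimensional space $V_0$. Each of $a_1,a_2,a_3,a_4,c_{56}$ joins a point $(p_1,0,p_3,0)$ of $b_6$ to a point $(0,q_2,0,q_4)$ of $b_5$ and so is parametrised as $(sp_1,tq_2,sp_3,tq_4)$; restricting $F\in V_0$ to such a line gives a binary cubic in $s,t$ divisible by $st$, hence with only two possibly nonzero coefficients. So each of the five lines imposes only $2$ linear conditions on $V_0$, the space $V$ of cubics through all seven lines is cut out by at most $10$ equations, and $\dim V\geqslant 2$.

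Next I would verify that $\sC_1$ and $\sC_2$ of $(\ref{singularCubic1})$--$(\ref{singularCubic2})$ both lie in $V$: vanishing on $b_5$ and $b_6$ is visible from the monomials appearing, vanishing on $a_1=\{z_1=z_2=0\}$ and on $c_{56}=\{z_3=z_4=0\}$ is equally immediate, and vanishing on $a_2,a_3,a_4$ becomes, after substituting the explicit parametrisations, a pair of polynomial identities in $\smalpha,\smalpha',\smbeta,\smbeta'$ for each line, to be checked by expansion. This gives the first assertion, that every cubic in the pencil contains the seven lines. Moreover $\sC_1$ and $\sC_2$ are linearly independent: their monomial supports $\{z_2^2z_3,\,z_1z_2z_4,\,z_2z_3z_4,\,z_1z_4^2\}$ and $\{z_2z_3^2,\,z_1z_2z_3,\,z_1z_3z_4,\,z_1^2z_4\}$ are disjoint, and under the standing hypothesis that $a_1,a_2,a_3,a_4,c_{56}$ are skew --- equivalently $\smalpha,\smalpha',\smbeta,\smbeta'\notin\{0,1\}$, $\smalpha\neq\smbeta$ and $\smalpha'\neq\smbeta'$ --- neither polynomial vanishes identically (a finite check on the coefficients). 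So the pencil is a genuine $\CP^1$ of cubics through the seven lines, and $\langle\sC_1,\sC_2\rangle\subseteq V$.

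The remaining assertions follow once $\dim V=2$, which I would get as follows, assuming some cubic through the seven lines is non-singular. There is a one-dimensional group $G$ of projective transformations of $\CP^3$ fixing all seven lines, exhibited in the discussion before the theorem, and any two non-singular cubics containing the seven lines are carried into one another by an element of $G$ --- the transformation $\Phi$ constructed there, whose extension to a projective map is the point deferred to the next section and which we may take for granted. Since $G$ fixes the seven lines it preserves $V$ and hence acts on $\mathbb{P}(V)$; the non-singular members of $\mathbb{P}(V)$ form a Zariski-open, hence dense, subset $U$, non-empty by hypothesis, and by the previous sentence $U$ lies in a single $G$-orbit, so $\dim U\leqslant\dim G=1$. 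A dense subset of $\mathbb{P}(V)$ of dimension $\leqslant1$ forces $\dim\mathbb{P}(V)\leqslant1$, i.e.\ $\dim V\leqslant2$; with $\dim V\geqslant2$ this gives $\dim V=2$ and $V=\langle\sC_1,\sC_2\rangle$, the second assertion. Every non-singular cubic in the pencil lies in $\mathbb{P}(V)$ and contains the seven lines, so any two of them are $G$-equivalent, hence projectively isomorphic --- the third assertion. Finally, any non-singular cubic contains a double six (Cayley--Salmon and Schl\"afli), hence seven lines $b_5,b_6,a_1,\dots,a_4,c_{56}$ of exactly this type; choosing coordinates as above and reading off $\smalpha,\smalpha',\smbeta,\smbeta'$ exhibits it inside $V=\langle\sC_1,\sC_2\rangle$ for those values, which is the last assertion.

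The one genuinely laborious step is checking the polynomial identities that put $\sC_1,\sC_2$ on $a_2,a_3,a_4$, which I would do by computer algebra. The one conceptual point is the bound $\dim V\leqslant2$, and it is exactly here that the non-singularity hypothesis cannot be dropped: a purely linear-algebra argument would have to allow the relevant $10\times10$ minors of the condition matrix to vanish along the locus of skew configurations carrying no smooth cubic, so one really does need the geometric input about $G$ and about the projective equivalence of smooth cubics through the seven lines.
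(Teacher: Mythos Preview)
Your argument is correct and follows the paper's own route: the coordinate normalisation, the one-parameter group $\phi(u,v)$, the projective equivalence $\Phi$ between smooth cubics through the seven lines, and the openness-of-nonsingularity step giving $\dim V\leqslant 2$ are exactly the ingredients assembled in the paragraphs preceding the theorem. The only difference is cosmetic: rather than verifying $\sC_1,\sC_2$ by direct substitution, the paper derives $\sC_1$ by restricting to the $6$-dimensional space of cubics linear in $(z_1,z_3)$ and quadratic in $(z_2,z_4)$ (automatically containing $b_5,b_6$ and $\phi(u,v)$-semi-invariant), where each of the five transversals imposes a single condition, and obtains $\sC_2$ by swapping the roles of $(z_1,z_3)$ and $(z_2,z_4)$.
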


\begin{proof}
  We have proved all of this already, except the explicit formulae.

  One approach to proving this is brute force. Write down the $18
  \times 20$ matrix corresponding to the 18 equations in 20
  unknowns. One can then compute its kernel in order to find the two
  equations. This is not as tedious as one might expect; it can be
  done by hand, and is the work of moments for a computer algebra
  system. We have included the formulae for completeness, but will not
  use them directly, so we will omit the details.

  It is interesting, however, to understand the general form of these
  equations, and this is something we will take advantage of.

  We have seen that the projective transformations
\begin{equation}\label{uv}
\phi(u,v):[z_1,z_2,z_3,z_4] \longrightarrow [u z_1, v z_2, u z_3, v
  z_4]
\end{equation}
will preserve the $7$ lines. We therefore look for cubic surfaces
which are linear in $z_1$, $z_3$ and quadratic in $z_2$, $z_4$. In
other words cubics of the form:
\begin{equation}
\label{ruledCubicEqn}
(a z_1 + b z_3) z_2^2 + (c z_1 + d z_3) z_2 z_4 + (e z_1 + f z_3)
z_4^2 = 0
\end{equation}
The justification for considering such surfaces is that they will
always contain $b_5$ and $b_6$ and will be invariant under
$\phi(u,v)$.

Suppose that $(0,w_2,0,w_4)$ and $(w_1, 0,w_3,0)$ are points on $b_5$
and $b_6$. The general point on the line between these points is:
\[[\lambda w_1, \mu w_2, \lambda w_3, \mu w_4]\] with $\lambda$, $\mu$
in $\CC$. When we put the coordinates of this point into equation
\eqref{ruledCubicEqn}, we get a common factor of $\lambda \mu^2$. Hence
the line lies on this cubic surface if and only if a single point on
the line away from $b_5$ and $b_6$ does.

If we choose a generic plane transverse to $b_5$ and $b_6$ it will
intersect the lines $a_1$, $a_2$, $a_3$, $a_4$, $c_{56}$ in 5
points. Plugging the coordinates of these intersection points into
equation \eqref{ruledCubicEqn} we get $5$ linear equations in the 6
unknowns $a,b,\ldots,f$. So there is a cubic surface of the given form
that contains all $7$ lines.\end{proof}

We have chosen this presentation of the classification of cubic
surfaces because it yields the following classification for twistor
lines on cubic surfaces.

\begin{theorem}
  For a generic set of $5$ points lying on a $2$-sphere in $S^4$,
  there exists a one-parameter family of projectively isomorphic but
  conformally non-isomorphic non-singular cubic surfaces with $5$
  twistor lines corresponding to the $5$ points.

  All cubic surfaces with $5$ twistor lines arise in this way.  Given
  such a surface, one can label the twistor lines $a_1$, $a_2$, $a_3$,
  $a_4$, $c_{56}$ and the two transversals $b_5$, $b_6$.

  One can associate a real invariant $\xi$ to a labelled cubic surface
  with five twistor lines in such a way that labelled cubic surfaces
  containing $5$ twistor lines are conformally isomorphic if and only
  if the points on the sphere are conformally isomorphic and the
  values for $\xi$ are equal.
\label{xiDefinition}
\end{theorem}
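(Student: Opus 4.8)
The plan is to reduce everything to the pencil of cubics through the seven lines $a_1,a_2,a_3,a_4,c_{56},b_5,b_6$ already in play, and to extract $\xi$ from the way the \emph{conformal}, rather than merely projective, stabiliser of that configuration acts on the pencil.

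First I would fix a normal form. Given a non-singular cubic $\sC$ with five twistor lines, Corollary~\ref{c56} and the Lemma preceding it let me label them $a_1,\dots,a_4,c_{56}$ with transversals $b_5,b_6=\jj b_5$. Being fibres of $\smpi$, the five twistor lines map to five points of the round $2$-sphere $\Sigma:=\smpi(b_5)=\smpi(b_6)$; the two lines lying over $\Sigma$ are interchanged by $\jj$, hence are precisely $b_5$ and $b_6$, so the five points recover the five twistor lines, and --- once $\Sigma$ is the only $2$-sphere through them --- recover $b_5,b_6$ as well. A conformal transformation then puts $\jj$ into standard form with $b_5=\{z_1=z_3=0\}$ and $b_6=\{z_2=z_4=0\}$; a short computation shows that the twistor line through $[0,w,0,1]\in b_5$ meets $b_6$ at $[\overline w,0,1,0]$, so the five twistor lines are encoded by five points $w_1,\dots,w_5\in\CC\cup\{\infinity\}$ --- precisely the five points on $\Sigma$ --- and the conformal transformations fixing $b_5$ and $b_6$ act on them as M\"obius transformations, i.e.\ as conformal equivalence of five points on a sphere.

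Next I would analyse the pencil and build $\xi$. Any cubic containing the five twistor lines meets each of $b_5,b_6$ in at least five points, hence contains them, hence lies in the pencil spanned by $\sC_1,\sC_2$ of \eqref{singularCubic1}--\eqref{singularCubic2}; here $\sC_1$ has the shape \eqref{ruledCubicEqn} (quadratic in $z_2,z_4$, linear in $z_1,z_3$) and $\sC_2$ is its mirror image, and I write a member as $\sC_1+\tau\,\sC_2$, $\tau\in\CP^1$. The projective transformations fixing all seven lines are the $\phi(u,v)$ of \eqref{uv}, and a one-line computation shows $\phi(u,v)$ carries $\sC_1+\tau\,\sC_2$ to a scalar multiple of $\sC_1+(v/u)\tau\,\sC_2$; so all members with $\tau\neq0,\infinity$ are projectively equivalent, recovering the projective statements. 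The key point is that $\phi(u,v)$ commutes with the standard $\jj$ exactly when $|u|=|v|$, whence the conformal stabiliser of the seven lines is a circle acting on the pencil by rotating $\tau$. I would therefore set $\xi:=\log|\tau|\in\RR$ (for the generic configurations of the statement the five points have trivial conformal stabiliser, so nothing further alters $|\tau|$; otherwise one symmetrises over that finite group). For completeness I would argue that a conformal equivalence between two such labelled cubics carries twistor lines to twistor lines and, by uniqueness of transversals, $b_5,b_6$ to $b_5,b_6$, hence induces a conformal equivalence of the five-point configurations; composing with a transformation that matches those configurations leaves a map fixing all seven lines, which lies in the circle and so preserves $\xi$ --- conversely, equal configurations and equal $\xi$ place the two cubics in one pencil differing by such a rotation. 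Finally, for five points with no four concyclic the discriminant of the pencil is not identically zero (this is exactly that hypothesis), so the non-singular members form a one-parameter family parametrised by $\xi$, each with exactly five twistor lines (at most five by the bound already established, at least five by construction), mutually projectively but not conformally equivalent, and every cubic with five twistor lines is returned by feeding its own five twistor points into this construction.

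The step I expect to be the main obstacle is not the algebra --- identifying $\xi=\log|\tau|$ and checking the $\phi(u,v)$/$\jj$ interaction are routine --- but the genericity bookkeeping: showing that a transversal exists and that $b_5,b_6$ are the \emph{only} transversals to five generic twistor lines (needed both to place every such cubic in the pencil and to force a conformal equivalence to preserve $b_5,b_6$), and establishing that ``no four of the five points lie on a circle'' is precisely the condition for the pencil to contain a non-singular member --- the degenerate case being the one where the Schl\"afli labelling, and hence the whole argument, collapses.
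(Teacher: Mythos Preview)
Your approach is essentially the same as the paper's: identify the pencil through the seven lines, observe that the projective stabiliser is the torus $\phi(u,v)$ while the conformal stabiliser is the subgroup with $|u|=|v|$, and extract $\xi$ as the invariant that this circle leaves fixed but the full torus does not. The only cosmetic difference is the packaging of $\xi$: the paper takes $\xi=|M/N|$ with $M,N$ the coefficients of two specific monomials (one drawn from $\sC_1$, one from $\sC_2$), which amounts to $|\tau|$ up to a fixed constant, whereas you take $\log|\tau|$; these are the same invariant after a monotone reparametrisation. (Incidentally, your scaling factor should be $u/v$ rather than $v/u$, since $\sC_1$ is homogeneous of bidegree $(1,2)$ and $\sC_2$ of bidegree $(2,1)$ in $(z_1,z_3),(z_2,z_4)$; this is harmless for the argument.)
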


\begin{proof}
  A $2$-sphere in $S^4$ lifts to two projective lines $b$ and $jb$ in
  $\CP^3$. The choice of $5$ points on the sphere determines $5$
  collinear lines in $\CP^3$. It follows from above that there exists
  a one-parameter family of cubics containing all $7$ lines. We shall
  show later that if the $5$ points are chosen generically then the
  general cubic in this family is non-singular. This being the case,
  we can label the twistor fibres $a_1$, $a_2$, $a_4$, $a_4$,
  $c_{56}$, and the transversals $b=b_5$, $jb=b_6$.

  The bijection $b_6\to b_5$ is determined by its action on 3 points,
  so in the coordinates used in our study we have $\jj[z_1,0,z_3,0] =
  [0,\overline{z_1},0,\overline{z_3}]$. The action of $\jj$ on all of
  $\CP^3$ follows by anti-linearity.
 
  Since $\jj$ maps the intersection of $a_i$ and $b_5$ to the
  intersection of $a_i$ with $b_6$, it follows that $\smalpha =
  \overline{\smalpha'}$ and $\smbeta = \overline{\smbeta '}$ in
  \eqref{crossRatioInvariants}.

  In general, a projective transformation \eqref{uv} of $\CP^3$ which
  fixes all $7$ labelled lines will not correspond to a conformal
  transformation of $S^4$. It will do so if and only if it preserves
  $\jj$. This will be the case if and only if $|u|=|v|$.

  The general cubic surface containing all $7$ lines is given by a
  linear combination of $\sC_1$ and $\sC_2$ as defined in equation
  (\ref{singularCubic1}) and (\ref{singularCubic2}). Given such a
  cubic, define $M \in \RR$ to be the coefficient of $z_1 z_4^2$ and
  $N \in \RR$ to be the coefficient of $z_3 z_2^2$. Define
  $\xi=|M/N|$.

  We need to check that neither $M$ nor $N$ is zero. We know that $M$
  is a non-zero multiple of the corresponding coefficient in the
  polynomial (\ref{singularCubic2}). Suppose that this coefficient
  were equal to zero. This would mean that any cubic surface
  containing the $7$ lines would depend only linearly upon $z_1$ since
  this is the only non-linear term in $z_1$ in either
  (\ref{singularCubic2}) or (\ref{singularCubic1}). This would mean
  that the cubic was ruled and hence singular. Similarly, we see that
  $N$ is non-zero.

  By construction, $\xi$ is invariant under transformations $\phi(u,v)$
  with $|u|=|v|$. Thus it is well defined solely in terms of the cubic
  surface and the labelling. Since $\xi$ changes in proportion to
  $u/v$, $\xi$ will always distinguish conformally inequivalent
  surfaces.
\end{proof}

\begin{corollary}
  Given the $27$ lines on a non-singular cubic surface there is an
  algorithm to determine whether it has a twistor structure such that
  $5$ of the $27$ are twistor fibres.
\end{corollary}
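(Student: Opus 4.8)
The plan is to reduce the question to a finite search over Schl\"afli labellings of the $27$ lines. By the Lemma and Corollary~\ref{c56}, if a twistor structure $\jj$ makes five of the $27$ lines into fibres then, after relabelling four of them as $a_1,a_2,a_3,a_4$ inside some double six (legitimate because twistor lines are skew and the Schl\"afli graph is $4$-ultrahomogeneous), the fifth is $c_{56}$, the two transversals of these five skew lines are the lines $b_5,b_6$ of that labelling, and $\jj$ satisfies $\jj b_5=b_6$, $\jj a_i=a_i$ ($1\le i\le 4$) and $\jj c_{56}=c_{56}$. There are only finitely many Schl\"afli labellings (namely $51840$), so it is enough to decide, for the seven-line configuration attached to each labelling, whether a twistor structure with these properties exists; each such decision will be a short explicit computation on the coordinates of the lines.

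First I would pin down what such a $\jj$ must do on the transversals. It restricts to an anti-holomorphic bijection $f\colon b_5\to b_6$ sending $a_i\cap b_5$ to $a_i\cap b_6$ for $i=1,2,3,4$ and $c_{56}\cap b_5$ to $c_{56}\cap b_6$. An anti-holomorphic self-map of $\CP^1$ is determined by three point-pairs and sends cross-ratios to their conjugates, so such an $f$ exists if and only if $\smalpha'=\overline{\smalpha}$ and $\smbeta'=\overline{\smbeta}$ in the notation of \eqref{crossRatioInvariants}; these four quantities are cross-ratios of intersection points of the seven lines, hence computed directly from the input. (These are exactly the necessary conditions that surfaced in the proof of Theorem~\ref{xiDefinition}; the point now is that they are also sufficient.) This pair of identities is the test the algorithm runs on each labelling, and the claim is that the cubic admits a twistor structure with five fibres among its $27$ lines precisely when the test succeeds for at least one labelling.

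To justify sufficiency I would build the twistor structure explicitly when the test succeeds. Take the bijection $f\colon b_5\to b_6$ just described, lift it to an anti-linear isomorphism $T$ from the $2$-dimensional subspace of $\CC^4$ lying over $b_5$ onto the subspace lying over $b_6$, and define an anti-linear map $\sigma$ of $\CC^4$ by $\sigma=T$ on the first subspace and $\sigma=-T^{-1}$ on the second. Then $\sigma^2=-\mathrm{id}$, so $\sigma$ is a quaternionic structure; its induced involution $\jj$ of $\CP^3$ has no fixed points, hence is a twistor structure, it swaps $b_5$ and $b_6$ realising the prescribed action on intersection points, and therefore carries each of $a_1,a_2,a_3,a_4,c_{56}$ to the unique line through its two (interchanged) intersection points --- that is, to itself --- acting on each without fixed points. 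So these five lines are twistor fibres of $\jj$, and since the cubic containing the seven lines is our given non-singular one, it then has five twistor lines. The converse direction --- the test fails for every labelling $\Rightarrow$ no such twistor structure --- is precisely the reduction of the first paragraph.

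The argument is mostly bookkeeping, and the main point requiring care will be the reduction in the first paragraph: one must be sure that the finite list of labelled configurations really captures \emph{every} twistor structure making five of the $27$ lines into fibres, which is exactly where the Lemma and Corollary~\ref{c56} are used. A secondary point is checking, in the construction above, that $\sigma$ is a genuine quaternionic structure, so that $\jj$ is fixed-point-free rather than a real structure with fixed locus $\mathbb{RP}^3$; the choice $\sigma=-T^{-1}$ on the second subspace is designed to force $\sigma^2=-\mathrm{id}$ and make this automatic.
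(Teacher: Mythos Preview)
Your argument is correct and lands on the same test as the paper: iterate over the finitely many candidate configurations and check whether the cross-ratio invariants satisfy $\smalpha'=\overline{\smalpha}$ and $\smbeta'=\overline{\smbeta}$. The paper's one-line proof simply states this test, leaning on the preceding Theorem~\ref{xiDefinition} for both directions, and it iterates over pairs of skew lines (the candidates for $b_5,b_6$) rather than over all $51840$ labellings; since every pair of skew $b,b'$ among the $27$ meets exactly five of the others, this is the same search stripped of the redundant $S_5$ relabelling of $a_1,\ldots,a_4,c_{56}$.

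Where you genuinely add something is in the sufficiency direction. The paper leaves the existence of a suitable $\jj$ implicit in the machinery of Theorem~\ref{xiDefinition} (five points on a $2$-sphere $\Rightarrow$ a one-parameter family of twistor structures), whereas you build the quaternionic structure directly from the anti-linear lift $T\colon V_5\to V_6$ via $\sigma=T\oplus(-T^{-1})$. This is a cleaner, self-contained way to see that the cross-ratio conditions are sufficient, and it even exhibits the one-parameter freedom (rescaling $T\mapsto\lambda T$ with $|\lambda|\ne1$ changes $\jj$ but not the five fibres), matching the invariant $\xi$ of Theorem~\ref{xiDefinition}. The only point you leave tacit is that every anti-linear $\sigma$ with $\sigma^2=-\mathrm{id}$ on $\CC^4$ is $GL(4,\CC)$-conjugate to the standard one, so that your $\jj$ really is a twistor structure in the paper's sense; this is standard and harmless.
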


\begin{proof}
  Run through all pairs of skew lines, compute the cross ratios of the
  intersection points and check whether $\smalpha =
  \overline{\smalpha'}$ and $\smbeta = \overline{ \smbeta'}$
\end{proof}

\bigbreak

We can summarize by saying that a cubic surface depends up to
projective transformation upon a choice of $4$ complex parameters
$\smalpha, \smbeta, \smalpha ', \smbeta '$ determined by the cross
ratios of the line intersection points. These $4$ complex parameters
do depend upon a labelling of the lines in the cubic --- so we have an
action of the graph isomorphism group of the Schl\"afli graph on the
space of such parameters. This is the Weyl group $W(E_6)$ of the
exceptional Lie group $E_6$. Thus the moduli space of cubic surfaces
up to projective isomorphism is given by an open subset of $\CC^4$
quotiented by $W(E_6)$. For more details, we refer the reader to
\cite{naruki} and \cite{ACT}.

In the case of conformal isomorphism classes of cubic surfaces with 5
twistor lines we have a choice of 2 complex parameters $\smalpha, \smbeta$
and one real parameter $\xi$. In addition we have a choice of
labelling for the $5$ twistor lines and a labelling of the two lines
collinear to all the twistor lines. So the moduli space of cubic
surfaces with $5$ twistor lines up to conformal isomorphism is given by
an open subset of $\CC^2 \times \RR$ quotiented by $S_5 \times \ZZ_2$.

In both cases we can write down an explicit equation for a cubic
surface with given values for the parameters by choosing appropriate
multiples of polynomials (\ref{singularCubic1}) and
(\ref{singularCubic2}).

\section{Identifying non-singular cubic surfaces with $5$ 
twistor lines}

Modern treatments of the classification of cubic surfaces usually
state that non-singular cubic surfaces are given by blowing up 6
points in $\CP^2$ in general position, the latter meaning that no 3
points are collinear and that the $6$ points do not all lie on a
conic.

This perspective highlights the intrinsic complex geometry of the
cubic surfaces --- it ostensibly describes cubic surfaces up to
biholomorphism rather than up to projective transformation. However,
these two classifications are equivalent. This is guaranteed by the
fact that any automorphism of a smooth hypersurface of $\CP^n$
($n\geqslant3$) of degree $d \ne n+1$ is induced by a projective
transformation. This in turn follows from the general correspondence
between maps to projective space and sections of complex line bundles,
see \cite{griffithsAndHarris}.

Because we are interested in the classification up to conformal
transformation, we have emphasized the embedding of the cubic surface
om $\CP^3$. Let us review the connection between this and the
intrinsic geometry.

Given a cubic surface $\sC$, let $\psi_1$ be the biholomorphism
$\CP^1\to b_5$ mapping $0$, $1$, $\infinity$ to the intersection points
of $b_5$ with $a_1$, $a_2$, $c_{56}$ respectively. Similarly define
$\psi_2:\CP^1\to b_6$ by sending $0$, $1$, $\infinity$ to points on
$a_1$, $a_2$, $c_{56}$. One can now define a rational map $\psi: \CP^1
\times \CP^1\to\sC$ by defining $\psi(z_1,z_2)$ to be the intersection
point of the line containing $\psi_1(z_1)$, $\psi_2(z_2)$ with the
surface $\sC$.

The map $\psi$ will be well defined for general points $(z_1,z_2)$ in
$\CP^1 \times \CP^1$. If we incorporate the multiplicity of the
intersection into our definition, it is clear that we have a map
$\psi$ well defined at all points except: $(0,0)$, $(1,1)$,
$(\smalpha,\smalpha ')$, $(\smbeta,\smbeta ')$, $(\infinity,\infinity)$
where the $\smalpha$'s and $\smbeta$'s are the cross-ratio invariants
defined earlier. These points correspond to the lines $a_1$, $a_2$,
$a_3$, $a_4$, $c_{56}$ respectively, and are indicated in Figure
\ref{fig:rationalMap}.
It turns out that $\psi$ extends to a biholomorphism from $\CP^1
\times \CP^1$ blown up at these five points to the cubic surface
$\sC$.

\medskip
\begin{figure}[h]
\scalebox{1.25}{\includegraphics{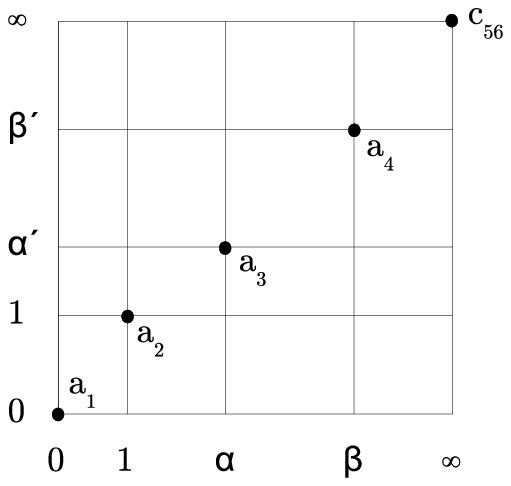}} 
\centering 
\caption {The five points to blow up on $\CP^1\times\CP^1$}
\label{fig:rationalMap}
\end{figure}
\bigskip 

Now, $\CP^1 \times \CP^1$ can be thought of as $\CP^2$ with two points
at infinity blown up and then the line at infinity blown down. This
allows us to think of the blow up of $\CP^1 \times \CP^1$ at $5$
points as being the blow up of $\CP^2$ at $6$ points corresponding to
$a_1$, $a_2$, $a_3$, $a_4$, $a_5$, $a_6$. Now, $c_{56}$ corresponds to
the line at infinity.

To be very concrete, blowing up the two points $[1,0,0]$ and $[0,1,0]$
at infinity and then blowing down the proper transform of the line at
infinity is given by the rational map $(z_1,z_2) \to (z_1,z_2)$. The
left hand side should be viewed as giving inhomogeneous coordinates
for $\CP^2$, the right hand side as giving inhomogeneous coordinates
for each factor of $\CP^1 \times \CP^1$. We define a rational map
$\tilde{\psi}$ from $\CP^2$ to our cubic by $\tilde{\psi}(z_1,z_2) =
\psi(z_1,z_2)$.

Since any four points in general position in $\CP^3$ are projectively
equivalent, we see that a choice of $6$ points to blow up corresponds
to the $4$ cross ratios $\smalpha$, $\smalpha'$, $\smbeta$,
$\smbeta'$.

Notice that the lines on the cubic surface are easily understood in
terms of the blow-up picture. The $a$ lines correspond to the points
that have been blown up. The line $c_{ij}$ corresponds to the straight
line in $\CP^2$ passing through the points $a_i$ and $a_j$. The line
$b_i$ corresponds to the conic passing through all the blown-up points
except $a_i$. One can immediately read off the intersection properties
of all the lines when they are thought of in terms of this
picture. This gives a particularly nice way of remembering the
structure of the Schl\"afli graph.

This intrinsic view of cubic surfaces allows us to tie up a loose end
we left dangling in the previous section. Recall that given two cubics
$\sC_1$ and $\sC_2$ containing the lines $a_1$, $a_2$, $a_3$, $a_4$,
$c_{56}$ we constructed a rational map $\Phi: \CP^3\to\CP^3$ sending
$\sC_1$ to $\sC_2$. We claimed that this map could was in fact a
biholomorphism. Identifying each of $\sC_1$ and $\sC_2$ with $\CP^2$
blown up at six points, we see that $\Phi$ restricted to $\sC_1$ is
essentially the identity --- hence it is certainly a
biholomorphism. 

The most important feature of this intrinsic view of cubic surfaces
from the perspective of the twistor geometry is the criteria it gives
for determining whether a cubic surface is non-singular. The blow-up
of $\CP^2$ at 6 points is obviously smooth, so if we can find a cubic
surface corresponding to the 6 points, that cubic surface will be
non-singular. To construct a cubic surface given 6 points in general
position, one considers the vector space $\mathcal C$ of cubic
\emph{curves} in $\CP^2$ that pass through all 6 points. This space
will be $4$-dimensional so long as no 6 points lie on a conic and no 3
points lie on a line. One then defines a rational map sending a point
$z\in\CP^2$ to the projectized dual space $\PP(\mathcal{C}^*)$ by
mapping a cubic polynomial to its value at $z$. This rational map is a
biholomorphism of the blow up at 6 points to a cubic surface in
$\PP(\mathcal{C}^*)$. This result was first discoved by Clebsch in
\cite{clebsch}. Details of the proof can be found in
\cite{griffithsAndHarris}.

We saw in the previous section that given $5$ points lying on a 2-sphere
in $S^4$ we can find a family of cubic surfaces with $5$ twistor lines
corresponding to these five points. It follows from the discussion
above that if the $5$ points on the 2-sphere are chosen in general
position then the cubic surfaces will be non-singular. We would like
to identify more clearly what ``in general position'' actually means
in this case. 

\begin{theorem}
Given $5$ points lying on a 2-sphere in $S^4$, there is a non-singular
cubic surface with $5$ twistor lines corresponding to these points if
and only if no $4$ of the points lie on a circle.
\end{theorem}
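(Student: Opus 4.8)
The plan is to translate the condition ``no 4 of the 5 points lie on a circle'' into the algebro-geometric condition ``no 3 of the corresponding blow-up points in $\CP^2$ are collinear, and the 6 blow-up points do not lie on a conic'', since we already know from Clebsch's construction that the latter is exactly what guarantees non-singularity. Recall from the previous section that the 5 twistor lines become, after identifying $\CP^1\times\CP^1$ (blown up at 5 points) with $\CP^2$ (blown up at 6 points), the 6 points $a_1,\dots,a_6$, where $a_6$ is an auxiliary point arising from the blow-down of the line at infinity and $c_{56}$ is that line at infinity. So there are really only two degeneracies of the 6-point configuration that can occur here, and I would analyze each in turn.

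First I would show that the circle condition on the 2-sphere corresponds exactly to collinearity of three of the points $a_1,a_2,a_3,a_4,c_{56}$ in the blow-up picture. The key observation is conformal: a circle through 4 of the 5 points on $S^2$ is, after lifting, a line in $\CP^3$ meeting 4 of the 7 lines; but among $a_1,a_2,a_3,a_4,c_{56}$ any line meeting four of them is one of the Schl\"afli lines, and in the $\CP^2$ model ``a line through four of the blow-up points'' is precisely the statement that four (hence, generically, the relevant three) of $a_1,\dots,a_4$ are collinear, or that $c_{56}$ — the line at infinity — passes through three of the $a_i$'s. Concretely, using the cross-ratio coordinates: in the coordinates of the previous section the five points sit at $(0,0),(1,1),(\smalpha,\smalpha'),(\smbeta,\smbeta'),(\infinity,\infinity)$ on $\CP^1\times\CP^1$, and since $\jj$ forces $\smalpha'=\overline{\smalpha}$, $\smbeta'=\overline{\smbeta}$, these five points all lie on the anti-diagonal — i.e.\ on the real locus, which is precisely the picture of 5 points on the round $S^2 \subset \CP^1$. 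Four of them lying on a circle then becomes a cross-ratio being real in a way that forces one of $\smalpha, \smbeta, \smalpha/\smbeta, \dots$ (the relevant cross ratios of the $\CP^2$ points) to land on a line — I would compute that the blow-up point $a_5$ (equivalently, a collinearity among $a_1,\dots,a_4$, or the line at infinity hitting an $a_i$) degenerates exactly on these loci.

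Next I would handle the conic degeneracy: I claim the 6 points $a_1,\dots,a_6$ automatically never lie on a conic in this situation, or rather that this degeneracy is already subsumed by a circle condition. The reason is that $a_1,\dots,a_5$ always lie on a conic — namely the image in $\CP^2$ of one ruling, or more precisely the points come from $\CP^1\times\CP^1$ where the five points lie on a $(1,1)$-curve (the anti-diagonal) — so the 6-point conic condition reduces to asking whether $a_6$ lies on the unique conic through $a_1,\dots,a_5$; translating back, $a_6$ is the blow-down point, and this lands us again on a configuration already forbidden by a ``4 points on a circle'' statement, or on a locus where the cubic is ruled. I would make this precise by the same cross-ratio bookkeeping, showing the conic-degeneracy locus is contained in the union of the five ``four points on a circle'' loci.

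Finally I would run the argument in both directions. If no 4 of the 5 points lie on a circle, the two paragraphs above show the 6 blow-up points are in general position, so Clebsch's construction produces a non-singular cubic containing all 7 lines — and by the Corollary the 5 twistor fibres sit on it. Conversely, if 4 of the points do lie on a circle, then three of $a_1,\dots,a_4,c_{56}$ are collinear in $\CP^2$ (or $a_6$ lies on the conic through the others), and I would argue that every cubic in the pencil through the 7 lines is then singular: a cubic surface containing two lines that meet must be singular at their intersection point, and the collinearity forces a new incidence among the 7 lines on the surface, producing such a crossing; alternatively one sees directly from the pencil \eqref{singularCubic1}--\eqref{singularCubic2} that the relevant coefficient vanishes and the surface becomes ruled. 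I expect the main obstacle to be the bookkeeping in the \emph{converse} direction — checking that \emph{every} member of the one-parameter pencil is singular, not just a generic one — since one must rule out the possibility that the collinearity is ``healed'' somewhere in the family; this is where I would fall back on the explicit formulae \eqref{singularCubic1} and \eqref{singularCubic2} and the ruled-implies-singular observation already used in the proof of Theorem~\ref{xiDefinition}.
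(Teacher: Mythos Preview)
Your overall strategy --- reduce to Clebsch's general-position criterion for the six points $A_1,\dots,A_6$ in $\CP^2$ --- is exactly the paper's, and your treatment of the collinearity conditions is on the right track (each collinearity among three of $A_1,A_2,A_3,A_4$ is precisely one of the four ``circle through $c_{56}$'' conditions; the remaining triples involving $A_5$ or $A_6$ are never collinear for distinct points). But your handling of the conic degeneracy contains a genuine error. You assert that the five points $(0,0),(1,1),(\smalpha,\overline\smalpha),(\smbeta,\overline\smbeta),(\infinity,\infinity)$ lie on ``the anti-diagonal, a $(1,1)$-curve'', and hence that $A_1,\dots,A_5$ always lie on a conic in $\CP^2$. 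The anti-diagonal $\{(z,\overline z)\}\subset\CP^1\times\CP^1$ is a \emph{totally real} $2$-sphere, not an algebraic curve; there is no $(1,1)$-curve through those five points in general, and correspondingly no automatic conic through five of the $A_i$. So the conic condition is not ``subsumed'' for the reason you give. The paper instead assumes a conic through all six $A_i$, applies Pascal's theorem to the hexagon $A_1A_2A_3A_4A_5A_6$, and computes the resulting $3\times3$ determinant explicitly; it collapses to $|\smalpha|^2(\smbeta-\overline\smbeta)-|\smbeta|^2(\smalpha-\overline\smalpha)+\smalpha\overline\smbeta-\overline\smalpha\smbeta=0$, which is precisely the concyclicity of $0,1,\smalpha,\smbeta$ --- the \emph{fifth} circle condition, the one \emph{not} involving $c_{56}$. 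This is a real computation that your argument does not supply, and without it the ``only if'' direction is incomplete.

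A second, smaller error: in your converse you write ``a cubic surface containing two lines that meet must be singular at their intersection point''. This is false --- on any smooth cubic the lines $a_i$ and $b_j$ ($i\ne j$) meet --- so that sentence cannot be the mechanism. In fact the converse needs no separate argument: once you have established the equivalence ``$A_1,\dots,A_6$ in general position $\Leftrightarrow$ no four of the five points concyclic'', Clebsch's theorem gives both directions at once, since a non-singular cubic through the seven lines exists if and only if the $A_i$ are in general position. (The paper does also record a coordinate-free reason the cubic must be singular when four points are concyclic, via the quadric $\sQ\supset\pi^{-1}(\Gamma)$, but this is a remark rather than part of the proof.)
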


\begin{proof}
  There are two lines in $\CP^3$ lying above $S^4$ under the twistor
  correspondence. Label one of them $b_5$ and the other $b_6$.
  
  To each of the five points on $b_5$, there is a unique twistor line
  over that point. We label these lines arbitrarily as $a_1$, $a_2$,
  $a_3$, $a_4$ and $c_{56}$.

  Three distinct points on a $2$-sphere are conformally equivalent,
  and so always in general position. We choose an inhomogeneous
  coordinate $z_1$ for $b_5$ and $z_2$ for $b_6$ by requiring that the
  intersections of $a_1$, $a_2$, $a_3$, $a_4$ and $c_{56}$ with $b_5$
  are given by $0$, $1$, $\smalpha$, $\smbeta$ and
  $\infinity$. Similarly we choose an inhomogeneous coordinate $z_2$
  for $b_6$ such that the intersection points are $0$, $1$,
  ${\overline \smalpha}$, ${\overline \smbeta}$ and $\infinity$.

  We now have an unambiguously defined rational map $\phi$ from
  $\CP^2$ to $b_5 \times b_6$ given in inhomogenous coordinates by
  $\phi(z_1,z_2) = (z_1,z_2)$.  Corresponding to each of the lines
  $a_1$, $a_2$, $a_3$, $a_4$ and $c_{56}$ we can define points $a_1'$,
  $a_2'$, $a_3'$, $a_4'$ and $c_{56}'$ in $b_5 \times b_6$ given by
  sending a line to its intersection point with each $b_i$. We can
  then define six points in $\CP^2$ as follows:
\begin{eqnarray*}
 A_1 & = &\phi^{-1}( a_1') = [0,0,1] \\
 A_2 & = &\phi^{-1}( a_2') = [1,1,1] \\
 A_3 & = & \phi^{-1}( a_3' ) = [\smalpha,{\overline \smalpha},1] \\
 A_4 & = & \phi^{-1}( a_4' ) = [\smbeta,{\overline \smbeta},1] \\
 A_5 & = & [1,0,0] \\
 A_6 & = & [0,1,0] 
\end{eqnarray*}
Note that $c_{56}$ corresponds to the line at infinity in
$\CP^2$. $A_5$ and $A_6$ correspond to the two lines at infinity in
$b_5 \times b_6$. The setup is precisely as summarized in
Figure~\ref{fig:rationalMap}.

The point we are making is that these points in $\CP^2$ are determined
entirely by the $5$ points on the sphere and the choice of labelling:
we do not need there to be a non-singular cubic through the five
twistor lines in order to construct the $A_i$.

  The blow up of $\CP^2$ at the $A_i$ corresponds to a smooth cubic if
  and only if these $A_i$ are in general position (meaning no three
  collinear and no conic through all $6$). This cubic must then be
  biholomorphic to one of the cubic curves in the pencil generated by
  \eqref{singularCubic1} and \eqref{singularCubic2}. We deduce that
  there is a smooth cubic with $5$ twistor lines corresponding to the
  five points on $S^2$ if and only if these six points $A_i$ in
  $\CP^2$ are in general position.

  $A_1$, $A_2$ and $A_3$ are collinear if and only if $\smalpha = {\overline
    \smalpha}$. This is equivalent to saying that $0$, $1$, $\smalpha$
  and $\infinity$ all lie on the real line. In invariant terms this is
  equivalent to requiring that $a_1$, $a_2$, $a_3$ and $c_{56}$ all
  lie on a circle in $S^2$.
  
  We deduce that there is a smooth cubic corresponding to the five
  points on $S^2$ only if no four of the points lie on a circle.

  It is a simple calculation to check that the condition that no four
  points lie on a circle implies that no three of the $A_i$ lie on a
  line. We also need to confirm that the same condition implies that
  there is no conic through all $6$ of the $A_i$.

  Suppose for a contradiction that there is such a conic and so the
  $A_i$ form an ``inscribed hexagon''. Pascal's theorem implies the
  intersection points
\[ A_1A_2\cap A_4A_5,\quad A_2A_3\cap A_5A_6,\quad A_3A_4\cap A_6A_1\]
are collinear. These points can be computed using the vector cross
product; the first is $(A_1\times A_2)\times(A_4\times A_5)$ with
a slight abuse of notation. The collinearity condition is then
\[ 
\det\left(\!\begin{array}{ccc} 
\overline\smbeta & \overline\smbeta & 1 \\ 
\smalpha-1 & \overline\smalpha-1 & 0\\ 
0 & \smbeta\overline\smalpha-\smalpha\overline\smbeta & 
\smbeta-\smalpha\end{array}\!\right)=0,\]
which gives 
\[
|\smalpha|^2(\smbeta-\overline\smbeta)-
|\smbeta|^2(\smalpha-\overline\smalpha)+
\alpha\overline\smbeta-\overline\smalpha\smbeta=0.\]
But this is easily seen to be exactly the condition that
$0,1,\smalpha,\smbeta$ lie on a circle in $\CC.$
\end{proof}

On first reading this proof one may wonder where the asymmetry between the $a_i$ and $c_{56}$ arises. It can be traced directly to the choice to associate $c_{56}$ with the points $z_1=\infinity$ and $z_2=\infinity$.

For a coordinate-free explanation of why the cubic must be singular if
four of the $5$ points in $S^2$ lie in a circle ${\Gamma}$, recall
that by by \cite[Theorem~3.10]{salamonViaclovsky} that there is a
quadric surface $\sQ$ in $\CP^3$ containing $\pi^{-1}(\Gamma)$ (where
$\pi$ is the twistor projection). Suppose that there is also
non-singular cubic $\sC$ for which the twistor fibres
$a_1,a_2,a_3,a_4$ from part of a double six $(a_i,b_j)$. In this case,
$\sC$ must be the only cubic containing the double six. But each $b_j$
intersects at least three of $a_1,a_2,a_3,a_4$ and therefore lies in
$\sQ$. The latter must now contain $a_5,a_6$ as well. But then the
union of $\sQ$ and any plane is a cubic containing the double six,
which is a contradiction.

As an application of our theorem, we observe that the well known
Clebsch diagonal surface does not have $5$ twistor lines irrespective
of the twistor structure $\jj$ one places on $\CP^3$. The Clebsch
diagonal surface is the complex surface in $\CP^4$ defined by the two
equations
\[\begin{array}{rcl}
 z_1^3 + z_2^3 + z_3^3 + z_4^3 + z_5^3 = 0\\[3pt]
 z_1 + z_2 + z_3 + z_4 + z_5 = 0.
\end{array}\]
It is biholomorphic to the surface in $\CP^3$ given by the single
equation:
\[ z_1^3 + z_2^3 + z_3^3 + z_4^3 = (z_1 + z_2 + z_3 + z_4)^3,\]
and is the only cubic surface with symmetry group $S_5$. It has the
nice property that all 27 lines on the cubic surface are real
lines. This immediately means that it admits no twistor structure $\jj$
such that it has five twistor lines. Simply note that the cross ratios
of all the intersection points on the lines must be real. Therefore
any four points on one of its lines must lie on a circle when that
line is viewed as the Riemann sphere.

\section{The Fermat cubic}

Having found a large family of cubic surfaces with $5$ twistor lines we
would like to ask if there are any particularly nice examples. In
particular what is the most symmetrical cubic surface with $5$ twistor
lines?

A conformal transformation of $S^4$ that induces a symmetry of a cubic
surface with $5$ twistor lines must leave the 2-sphere image of $b_5$
and $b_6$ fixed. If the conformal transformation leaves the image of
the $5$ twistor lines fixed, then the associated projective
transformation must swap $b_5$ and $b_6$. Otherwise the conformal
transformation must permute the $5$ points on the 2-sphere.

Therefore let us first choose the most symmetrical arrangement of 5
points on a 2-sphere no four of which lie on a circle. If we have a
rotation of the sphere that permutes $n$ of the points then those
points must all lie on a circle. So $n \leqslant3$. So any rotation
fixes at least two points. Either those two points lie on the axis of
rotation, or the rotation is a rotation through 180 degrees and the
fixed points all lie on a circle. We deduce that the largest possible
symmetry group for the five points is $\ZZ_3 \times \ZZ_2$ and, up to
conformal transformation of the 2-sphere, we can assume that our five
points are $0$, $\infinity$ and the three cube roots of unity. The
$\ZZ_3$ action rotates the three cube roots into each other. The
$\ZZ_2$ action swaps $0$ and $\infinity$.

Setting $\smalpha$ and $\smbeta$ to be complex cube roots of unity and
$\smalpha '$ and $\smbeta '$ to be their conjugates, polynomials
\eqref{singularCubic1} and \eqref{singularCubic2} simplify to:
\[\begin{array}{l}
-3 i \sqrt{3} \left(z_2 z_3^2-z_1^2 z_4\right),\\[5pt]
3i\sqrt{3} \left(-z_2^2 z_3+z_1 z_4^2\right).
\end{array}\]
We now wish to choose a linear combination of these polynomials that
will remain fixed under the transformation that swaps the lines $b_5$
and $b_6$. This corresponds to the projective transformation $z_1
\mapsto z_2$, $z_2 \mapsto z_1$, $z_3 \mapsto z_4$, $z_4 \mapsto z_3$.

Hence there is, up to conformal transformation, a unique non-singular
cubic surface with $5$ twistor lines and symmetry group $\ZZ_3 \times
\ZZ_2 \times \ZZ_2$. It is defined by
\begin{equation}
\label{symmetricalCubicSurface}
z_1 z_4^2 + z_2 z_3^2 - z_3 z_2^2 - z_4 z_1^2 = 0.
\end{equation}
One can make a conformal transformation (corresponding to using the
cube roots of $-1$ rather than those of $1$) to replace the two minus
signs with plus signs. In any case, it is projectively, but not
conformally, equivalent to a familar example: the Fermat cubic, which
is defined by the equation
\[ z_1^3 + z_2^3 + z_3^3 + z_4^3 =0. \]
One can prove that these surfaces are projectively equivalent by
calculating the cross ratio invariants we defined earlier. This
approach allows one to write down an explicit linear transformation
sending the Fermat cubic to the surface
(\ref{symmetricalCubicSurface}).

A more pleasing approach is to use the symmetries of the Fermat cubic
to deduce that there must be some twistor structure that gives it five
twistor lines. To see how this is done, first choose a complex cube
root of unity $\smomega$ and label $7$ of the lines on the Fermat cubic as
follows:

\bigbreak

\begin{center}
\begin{tabular}{ll} \toprule
Label\phantom{mmm} & Line \\ 
\midrule $b_5$ & $z_1+ \smomega z_2=z_3+ \smomega^2 z_4=0$
\\ $b_6$ & $z_1+ \smomega^2 z_2=z_3+ \smomega z_4=0$ \\ $a_1$ & $z_1 +
\smomega z_2=z_3+ \smomega z_4=0$ \\ $a_2$ & $z_1 + z_4 = z_2 + z_3 = 0$
\\ $a_3$ & $z_1 + \smomega z_4 = z_2 + \smomega z_3 = 0$ \\ $a_4$ & $z_1 +
\smomega^2 z_4 = z_2 + \smomega^2 z_3 = 0$ \\ $c_{56}$ & $z_1 + \smomega^2
z_2=z_3+\smomega^2 z_4=0$ \\ \bottomrule
\end{tabular}
\end{center}
\bigbreak

Consider the symmetry of the cubic given by
\[(z_1,z_2,z_3,z_4) \longmapsto (z_1, z_2, \smomega z_3, \smomega z_4).\]
This generates a $\ZZ_3$ action that fixing the lines $b_5$, $b_6$,
$a_1$ and $c_{56}$ and permuting $a_2$, $a_3$, $a_4$. Thus we have a
$\ZZ_3$ symmetry of $b_5$ fixing the intersection points with $a_1$
and $c_{56}$ and permuting the intersection points with $a_2$, $a_3$
and $a_4$. Therefore these $5$ points on $b_5$ are conformally
equivalent to the points $0$, $\infinity$, $1$, $\smomega$ and
$\overline{\smomega}$ on the Riemann sphere. The same applies to the $5$
intersection points with $b_6$. This implies that the invariants
(\ref{crossRatioInvariants}) satisfy $\smalpha = \overline{\smbeta}$
and $\smalpha' = \overline{ \smbeta'}$

Now consider the symmetry:
\[ (z_1,z_2,z_3,z_4) \longmapsto (z_3, z_4, z_1, z_2).\]
This swaps $b_5$ and $b_6$, swaps $a_3$ and $a_4$ and fixes $a_1$,
$a_2$ and $c_{56}$. We deduce that:
\[
\smalpha = ( c_{56}, a_1; a_2, a_4)_{b_5} = (c_{56}, a_1; a_2,
a_3)_{b_6} = \smbeta' = \overline{\smalpha ^ \prime}.
\]
Thus the cross ratios of the intersection points on $b_5$ and $b_6$
are the same as the cross ratios for the intersection points on the
cubic surface (\ref{symmetricalCubicSurface}). We conclude that the
cubic given by (\ref{symmetricalCubicSurface}) is projectively
isomorphic to the Fermat cubic.

\begin{theorem} 
The set of twistor structures on $\CP^3$ for which the Fermat cubic
has $5$ twistor lines is a real $1$-manifold with 54 components.
\end{theorem}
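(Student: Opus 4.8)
The plan is to partition the set of twistor structures $\jj$ for which the Fermat cubic $F$ has five twistor lines according to \emph{which} five of the twenty‑seven lines of $F$ are the fibres, to identify each piece with a copy of $\RR$, and then to count the pieces; the count is where the real work lies.

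First I would set up the bookkeeping. By the Lemma, Corollary~\ref{c56} and Theorem~\ref{xiDefinition}, $F$ has five $\jj$-twistor lines exactly when there is a set $\mathcal C$ of five of the twenty‑seven lines — necessarily of the form $a_1,a_2,a_3,a_4,c_{56}$ in Schl\"afli's notation, all meeting two common transversals $b_5,b_6$ — with $\jj$ fixing each line of $\mathcal C$; this forces $\jj b_5=b_6$ and is equivalent to the cross‑ratio relations $\smalpha=\overline{\smalpha'}$, $\smbeta=\overline{\smbeta'}$ of \eqref{crossRatioInvariants} holding for some labelling. Since every skew pair of lines on a smooth cubic has exactly five common transversals lying on the cubic and these are mutually skew, the configurations $\mathcal C$ correspond bijectively to skew pairs $\{b_5,b_6\}$, of which there are $27\cdot 16/2=216$. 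Call $\mathcal C$ \emph{admissible} when the cross‑ratio relations hold, and for admissible $\mathcal C$ let $T_{\mathcal C}$ be the set of twistor structures making the five lines of $\mathcal C$ fibres. Because a cubic carries at most five twistor lines, the sets $T_{\mathcal C}$ are pairwise disjoint, and the object to be described is $\bigsqcup_{\mathcal C}T_{\mathcal C}$.

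Next I would analyse a single $T_{\mathcal C}$. A $\jj\in T_{\mathcal C}$ must interchange $b_5$ and $b_6$ (otherwise $F$ would have six twistor lines) and, fixing each line $\ell$ of $\mathcal C$, must carry $\ell\cap b_5$ to $\ell\cap b_6$; admissibility says these five point‑conditions are consistent, and they then determine the antiholomorphic map $\jj|_{b_5}\colon b_5\to b_6$ uniquely, hence also $\jj|_{b_6}=(\jj|_{b_5})^{-1}$. Writing $\jj$, normalized as an antilinear map of $\CC^4$ with $\jj^2=-\mathrm{id}$, in block form relative to $\CC^4=b_5\oplus b_6$: the two diagonal blocks vanish, the $b_6\to b_5$ block is determined by the $b_5\to b_6$ block $P$ via $\jj^2=-\mathrm{id}$, and the requirement that $\jj|_{b_5}$ be the prescribed projective map fixes $P$ up to a scalar, $P=\lambda P_0$; finally $\jj_\lambda$ and $\jj_{\lambda'}$ coincide as maps of $\CP^3$ precisely when $|\lambda|=|\lambda'|$. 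Every such $\jj$ squares to $-\mathrm{id}$, so is fixed‑point‑free, hence — being a quaternionic structure — conjugate to the standard one, i.e.\ genuinely a twistor structure. Thus $T_{\mathcal C}\cong\RR_{>0}$, and since $\jj_\lambda$ degenerates to a non‑invertible antilinear map as $|\lambda|\to0$ or $\infty$, $T_{\mathcal C}$ is a closed, connected, one‑dimensional submanifold of the manifold of twistor structures. Hence $\bigsqcup_{\mathcal C}T_{\mathcal C}$ is a closed $1$-submanifold with one component per admissible $\mathcal C$, and the theorem reduces to showing there are exactly $54$ admissible configurations.

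The count is the heart of the matter, and I expect it to be the main obstacle. The twenty‑seven lines of $F$ are explicit, and they split into three families of nine according to the three pairings of the coordinates; a direct check shows the skew pairs $\{b_5,b_6\}$ lying within a single family number $3\cdot 18=54$, and that the group $G$ of order $648$ generated by coordinate permutations and cube‑root rescalings of the coordinates — which acts on $F$, hence carries twistor structures to twistor structures and admissible configurations to admissible configurations — is transitive on these $54$ (the stabiliser of a family already acts transitively on its eighteen within‑family skew pairs, via translations and sign changes of an $\mathbb{F}_3^2$ of line‑labels, and $S_4$ permutes the three families transitively). The explicit configuration exhibited just before the statement, with $b_5=\{z_1+\smomega z_2=z_3+\smomega^2 z_4=0\}$ and $b_6=\{z_1+\smomega^2 z_2=z_3+\smomega z_4=0\}$, lies within a family, and the computation there shows it is admissible; hence all $54$ of these are admissible. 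It then remains to show that none of the other $162$ configurations — those whose transversal pair meets two distinct families — is admissible: for a representative of each $G$-orbit among them one verifies, by a direct cross‑ratio computation with the listed lines, that the five‑point set cut on $b_5$ is not antiholomorphically isomorphic to the one cut on $b_6$, so no labelling satisfies $\smalpha=\overline{\smalpha'}$, $\smbeta=\overline{\smbeta'}$. This finite computation, together with the orbit bookkeeping, is the part that actually requires work; everything else is formal. It yields exactly $54$ admissible configurations, hence exactly $54$ components.
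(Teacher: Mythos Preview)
Your proposal is correct and follows essentially the same route as the paper: identify a one-parameter family of twistor structures per admissible five-line configuration, use the order-$648$ automorphism group of the Fermat cubic to control the count, and dispose of the remaining skew pairs by a cross-ratio check on symmetry representatives. The only cosmetic difference is that the paper obtains $54$ as the orbit--stabilizer quotient $648/12$ (the $12$ being the conformal symmetry group already computed for the model surface \eqref{symmetricalCubicSurface}), whereas you reach $54$ by directly counting the within-family skew pairs as $3\times18$; the residual exclusion of the cross-family pairs is then reduced, in both arguments, to the same single cross-ratio verification.
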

\begin{proof}
  The surface (\ref{symmetricalCubicSurface}) has $12$ conformal
  symmetries. The Fermat cubic has symmetry group $S_4 \times \ZZ_3
  \times \ZZ_3 \times \ZZ_3$ given by permutations of the coordinates
  and by multiplying various coordinates by cube roots of unity. Thus
  there are $4! \cdot 27/12 = 54$ twistor structures on $\CP^3$ such
  that the Fermat cubic is isomorphic to surface
  (\ref{symmetricalCubicSurface}) with the standard twistor
  structure. We can then vary the invariant $\xi$ to get a
  one-parameter family of conformally inequivalent twistor structures.

  We need to check that there are no other twistor structures that
  give the Fermat cubic five twistor lines. We we gave an algorithm to
  do this eariler: run through all pairs of skew lines and compute
  cross ratios. We can speed this up significantly using the
  symmetries of the Fermat cubic. The general line on the Fermat cubic
  is \[z_i + \smeta_1 z_j = z_k + \smeta_2 z_l = 0\] where
  $\{i,j,k,l\}$ is a permutation of $\{1,2,3,4\}$ and $\smeta_1$ and
  $\smeta_2$ are cube roots of unity. So given two skew lines on the
  Fermat cubic, using the cubic's symmetries we can assume that the
  first line is:
\[ z_1 + z_2 = z_3 + z_4 = 0 \]
and the second line is one of:
\[\begin{array}{c}
 z_1 + \smeta_1 z_2 = z_3 + \smeta_2 z_4 = 0,\\[3pt]
 z_1 + \smeta_1 z_3 = z_2 + \smeta_2 z_4 = 0.
\end{array}\]
In the first case there is a $\ZZ_3$ symmetry preserving both lines
--- so if we have $5$ twistor lines it will be one of the cases
already considered. In the second case we can further assume that
$\smeta_1=1$ and, since the lines are skew, $\smeta_2\ne1$. Therefore
we just need to show that the cross ratios of the intersection points
of the 5 lines on the Fermat cubic meeting $z_1 + z_2 = z_3 + z_4 = 0$
and $z_1 + z_3 = z_2 + \smomega z_4 = 0$ do not form complex conjugate
pairs. This is easily done.
\end{proof}

There is a lot more one could ask about the twistor geometry of the
Fermat cubic. For example: what is the topology of its discriminant
locus? How does this vary as one varies the choice of twistor
structure? We will consider these questions in another paper.

\medskip

\noindent{\textbf{Acknowledgment.} This paper develops material from
  the second author's PhD thesis \cite{povero}. A major debt of
  gratitute is due to the Politecnico di Torino and to colleagues in
  its Department of Mathematical Sciences.

\small

\bibliographystyle{acm}
\bibliography{APS}

\bigskip

\begin{flushleft}

\textbf{AMS Subject Classification: 53C28, 14N10, 53A30} 

\bigskip

John ARMSTRONG\\
Department of Mathematics, King's College London\\
Strand, London WC2R 2LS, UK\\
e-mail: \texttt{johnarmstrong5@googlemail.com}\\[2ex]

Massimiliano POVERO\\
Dipartimento di Matematica, Universit\`a di Torino\\
Via Carlo Alberto 10, 10123 Torino, ITALIA\\
e-mail: \texttt{massimiliano.povero@polito.it}\\[2ex]

Simon SALAMON\\ 
Department of Mathematics, King's College London\\
Strand, London WC2R 2LS, UK\\
e-mail: \texttt{simon.salamon@kcl.ac.uk}

\end{flushleft}

\normalsize
\label{\thechapter:lastpage}

\enddocument